\newtheorem{theorem}{Theorem}[section]
\newtheorem{lemma}{Lemma}[section]
\newtheorem{proposition}{Proposition}[section]
\theoremstyle{definition}
\newtheorem{definition}{Definition}[section]
\theoremstyle{remark}
\newtheorem{remark}{Remark}[section]
\newcommand{\na}{\nabla}
\newcommand{\n}{\rho}
\newcommand{\de}{\delta}
\newcommand{\om}{\Omega}
\newcommand{\la}{\label}
\newcommand{\bnn}{\begin{eqnarray*}}
\newcommand{\enn}{\end{eqnarray*}}
\newcommand{\ba}{\begin{aligned}}
\newcommand{\ea}{\end{aligned}}
\newcommand{\be}{\begin{equation}}
\newcommand{\ee}{\end{equation}}
\def\p{\partial}
\def\norm[#1]#2{\|#2\|_{#1}}
\def\g{\gamma}
\def\a{\alpha}
\def\r{\mathbb{R}^{3}}
\def\lam{\lambda}
\def\th{\theta}
\def\de{\delta}
\def\lap{\triangle}
\renewcommand{\div}{{\rm div}}
\theoremstyle{definition}
\numberwithin{equation}{section}
\begin{document}

\title[Weak  solutions to  CH/NS for compressible fluids]
{\bf Weak solutions to the stationary Cahn-Hillard/Navier-Stokes equations for compressible fluids}


\author[Z. Liang]{Zhilei Liang}
\address{School of  Mathematics, Southwestern
University of Finance and Economics,  Chengdu 611130,  China.}
\email{zhilei0592@gmail.com}

\author[D. Wang]{Dehua Wang*}\thanks{* Corresponding author.}
\address{Department of Mathematics, University of Pittsburgh,
                           Pittsburgh, PA 15260, USA.}
\email{dwang@math.pitt.edu}

\begin{abstract}
We are concerned with the   Cahn-Hilliard/Navier-Stokes equations for the stationary compressible flows   in a three-dimensional bounded domain. The governing  equations consist  of  the stationary Navier-Stokes equations describing the  compressible fluid  flows  and the stationary  Cahn-Hilliard type diffuse  equation   for  the mass  concentration difference. We prove  the existence  of weak solutions when   the  adiabatic exponent $\g$ satisfies $\g>\frac{4}{3}$. The  proof is based on the weighted total energy estimates and the new techniques developed  to overcome the  difficulties from the  capillary  stress.   
 \end{abstract}

\keywords{Stationary equations, weak solutions,   Navier-Stokes,   Cahn-Hilliard,  mixture of fluids,  diffuse interface.}
\subjclass[2010]{35Q35,  76N10, 35Q30,  34K21,  76T10.}	
\date{\today}

\maketitle

\section{Introduction}

The Cahn-Hilliard/Navier-Stokes    system  is one of the  important diffuse interface models (cf.\cite{am,chan,low})
 describing  the evolution of mixing fluids.
 The mixture is  assumed to be macroscopically immiscible,  with a partial mixing in a small interfacial region where the sharp interface
 is regularized  by the Cahn-Hilliard   type diffusion   in terms   of  the mass  concentration difference. Roughly speaking,
 the Cahn-Hilliard  equation is  used for modeling   the loss of mixture homogeneity and the formation of pure phase regions, while
 the Navier-Stokes   equations describe  the hydrodynamics  of the mixture that is  influenced by  the order parameter, due to the surface  tension and its   variations,  through an extra capillarity force  term.

In this paper,  we are interested in  the   following stationary  Cahn-Hilliard/Navier-Stokes  system  for  the    mixture of compressible  fluid  flows  in a three-dimensional 
bounded domain $\om\subset\mathbb{R}^3$:
  \begin{equation}\label{1}
\left\{\ba
&\div(\n u)=0,\\
&\div(\n u\otimes u) = \div\left(\mathbb{S}_{ns}+\mathbb{S}_{c}-P\mathbb{I} \right)+\n g,\\
&\div (\n u c)=\lap \mu,\\
&\n\mu=\n \frac{\p f(\n,c)}{\p c}-\lap c,
\ea \right.
\end{equation}
 where  $\n$ denotes the total density,  
 $u$ the mean velocity field, $c$  the mass concentration  difference of the two components,  $\mu$  the chemical potential,  
  and $g$   the external force; the tensor
   \be\la{c0} \mathbb{S}_{ns} =\lambda_{1}\left(\na u+(\na u)^{\top}\right)+\lambda_{2} \div u\mathbb{I},\ee
   is the    Navier-Stokes    stress tensor,
where  
$\mathbb{I}$ is   the $3\times 3$ identity matrix,
   $\lambda_{1},\,\lambda_{2}$ are     constant  such that 
\be\la{01}\lambda_{1}>0,\quad2\lambda_{1}+3\lambda_{2}\ge0;\ee
the tensor
  \be\la{b0}\mathbb{S}_{c}=-\na c\otimes \na c+\frac{1}{2}|\na c|^{2}\mathbb{I},\ee 
 is the  capillary  stress tensor;  and 
  \be\la{b00} P=\n^{2}\frac{\p f(\n,c)}{\p \n} ,\ee  
is  the  pressure       with   the free energy density (cf. \cite{Fei,low})
\be\ba\la{b1} f(\n,c) 
=\n^{\g-1}+H_{1}(c)\ln \n  +H_{2}(c),\ea\ee
where   $\g>1$ is the adiabatic exponent,    and $H_{i}\,(i=1,2)$ are two given functions.
The corresponding  evolutionary  diffuse interface model  was derived in \cite[Section 2.2]{Fei} where the existence of weak solutions was obtained for $\g>\frac{3}{2}$.
 We refer the readers to \cite{am,chan,low,Fei,liang} for more discussions on the physics and models of mixing fluids with   diffuse interfaces.


We briefly review some related results in literature. 
For the stationary  Navier-Stokes equations of compressible flows, the existence of weak solutions was studied in Lions \cite{lion} with $\g>\frac{5}{3}$, 
Novotn\'{y}-Stra\v{s}craba \cite{novo1} with $\g>\frac{3}{2}$, 
Frehse-Steinhauer-Weigant \cite{freh} with $\g>\frac{4}{3}$,  Plotnikov-Weigant\cite{pw} with $\g>1$,
as well as in Jiang-Zhou \cite{jiang} and  Bresch-Burtea  \cite{Bresch-Burtea} for  periodic domains.
For the stationary Cahn-Hilliard/Navier-Stokes equations of incompressible flows,    the  existence  of weak solutions was obtained  in  
Biswas-Dharmatti-Mahendranath-Mohan \cite{bis}, Ko-Pustejovska-Suli \cite{ko1}, and Ko-Suli \cite{ko2}.
For  the compressible  Cahn-Hilliard/Navier-Stokes equations, Liang-Wang in \cite{liang} proved the  existence  of weak solutions  in case of  the adiabatic exponent 
$\g>2.$  See \cite{lion,novo1,fei,freh,pw,jiang,mpm,mp1,bis,ko1,ko2,liang,Bresch-Burtea,bis2021,CJWZ2020} and  their references for more results.

%
%

In  this    paper, we shall continue our study on the    existence  
 of weak solutions,    and   improve our previous  result  obtained in \cite{liang}  for $\g>2$ to the case of   $\g>\frac{4}{3}$ for the  stationary equations \eqref{1} subject to 
  the  following  boundary conditions:
\be\la{1a}u=0,\,\,\,\frac{\p c}{\p n}=0,\,\,\,\frac{\p \mu}{\p n}=0,\quad {\rm on} \,\,\,\p\om,\ee
and the additional conditions:
\be\la{0r} \int \n(x)dx=m_{1}>0,\quad    \int \n(x) c(x)dx=m_{2},\ee
with two given   constants $m_1$ and $m_{2}$, where $n$ is the normal vector of   $\p\om$.

Before stating our main results, we  introduce some notation  that will be used throughout this paper.
 For two given matrices  $\mathbb{A}=(a_{ij})_{3\times3}$ and $\mathbb{B}=(b_{ij})_{3\times3}$,   we denote their   scalar product   by  $\mathbb{A}:\mathbb{B}=\sum_{i,j=1}^{3}a_{ij}b_{ij}$.  For two vectors $a,\,b\in \mathbb{R}^{3}$,  
denote $a\otimes b =(a_{i}b_{j})_{3\times3}.$
We use $ \int f=\int_{\om}f(x)dx$ for simplicity.
For  any $p\in [1,\infty]$  and integer $k\ge 0,$       
$W^{k,p}(\om)$    is   the standard  Sobolev space  (cf. \cite{ad}),
and \bnn 
&W_{0}^{k,p} =\left\{f\in W^{k,p}:  \ f|_{\p\om}=0\right\},\quad  W_{n}^{k,p} =\left\{f\in W^{k,p}:   \   \frac{\p f} {\p n}|_{\p\om}=0\right\},\\
& L^{p}=W^{0,p},\quad H^{k}=W^{k,2},\quad H_0^k=W_0^{k,2}, \quad H_n^k=W_n^{k,2}, \\
&\overline{L^{p}}=\big\{f\in L^{p}: \ (f)_{\om}=0\big\},
\enn
where  $(f)_{\om}=\frac{1}{|\om|}\int f$ is  the average of $f$ over $\om$.

As in \cite{liang}, we define the weak solution as follows.
\begin{definition}\la{defi}
 The vector of functions $(\n,u,\mu,c)$ is called    a weak solution to the problem  \eqref{1}-\eqref{0r}, 
 if
\bnn\ba &  \n\in L^{\g+\th}(\om ),\,\,\,\n\ge0\,\,{\rm a.e. \ in}\,\,\om,
\quad u\in H_{0}^{1}(\om),\quad \mu\in  H_{n}^{1}(\om), \quad c\in W_{n}^{2,p}(\om),\ea\enn
  for  some $p>\frac{6}{5}$ and  $\th>0$,  and the following properties hold true:  

(i)  The system   $\eqref{1}$  is   satisfied in the sense of distributions in $\Omega$,
and \eqref{0r}  holds for the given constants $m_{1}>0$ and $m_{2}\in \mathbb{R}.$

(ii)
If $(\n,u)$ is prolonged by zero outside $\Omega$, then both the equation $\eqref{1}_{1}$ and
$$\div(b(\n) u)+ \left(b'(\n)\n-b(\n)\right)\div u=0$$ are satisfied in the sense of distributions  in $\r$,
where  $b\in C^{1}([0,\infty))$ with  $b'(z)=0$ if $z$ is  large enough.

 (iii) The   following energy inequality  is valid:
\bnn \int \left(\lambda_{1}|\na u|^{2}+(\lambda_{1}+\lambda_{2})(\div u)^{2}+|\na \mu|^{2}\right) dx
\le \int  \n g \cdot u.\enn
\end{definition}

\bigskip

We now state our main result.
\begin{theorem}\la{thm1.1}
Let  $\om\subset\mathbb{R}^3$ be a bounded domain with  $C^{2}$  boundary.      Assume that \be\la{0}\g>\frac{4}{3}, \ee
and  
\begin{equation}\label{b2} g  \in  L^{\infty}(\om),\quad |H_{i}(c)|+|H_{i}'(c)|\le \overline{H} \quad \forall \, c\in\mathbb{R}, \quad i=1,2,\end{equation}
for some constant $\overline{H}<\infty$.
Then, for any given constants $m_{1}>0$ and $m_{2}$,  the problem \eqref{1}-\eqref{0r}  admits a  weak solution  $(\n,u,\mu,c)$  in the sense of Definition \ref{defi}.
\end{theorem}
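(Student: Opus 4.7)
The plan is to construct approximate solutions via a multi-level regularization, derive uniform a priori bounds---most crucially, improved integrability of the density---and then pass to the limit by the Lions--Feireisl effective viscous flux / renormalization machinery. First, I would set up a three-parameter approximate system in the spirit of Feireisl--Novotn\'y and of our earlier work \cite{liang}: replace the continuity equation by
\[
\varepsilon\rho-\varepsilon\Delta\rho+\mathrm{div}(\rho u)=\varepsilon\tfrac{m_1}{|\Omega|}\quad\text{in }\Omega,\qquad\tfrac{\partial\rho}{\partial n}=0\ \text{on }\partial\Omega,
\]
augment the pressure by an artificial term $\delta\rho^\beta$ with $\beta$ large, and solve the coupled momentum--Cahn--Hilliard subsystem by a Galerkin scheme for $u$ combined with elliptic solvers for $(c,\mu)$. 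Existence at each level then follows from a Brouwer / Leray--Schauder fixed point, and the mass constraints \eqref{0r} are preserved by construction.

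Next, I would derive the basic energy estimate by testing the momentum equation with $u$ and equation $\eqref{1}_3$ with $\mu$, using the Korteweg-type cancellation
\[
\mathrm{div}\,\mathbb{S}_c=-\Delta c\,\nabla c=-\rho(\partial_c f-\mu)\nabla c,
\]
which allows the capillary power to combine with $\int\mathrm{div}(\rho u c)\mu$ and the pressure work, leaving only lower-order terms controllable via the sublinear hypothesis \eqref{b2} on $H_i,H_i'$. Together with Neumann elliptic regularity for $-\Delta c=\rho(\mu-\partial_c f)$ this yields $u\in H^1_0$, $\mu\in H^1_n$, and a bound on $c$ in $W^{2,p}$ for a suitable $p>\tfrac{6}{5}$ depending only on an a priori $L^q$-bound of $\rho$.

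The core difficulty is lifting the density integrability to $L^{\gamma+\theta}$ in the borderline range $\gamma>\tfrac{4}{3}$. A first, modest improvement comes from the Bogovskii test function $\varphi=\mathcal{B}[\rho^{\theta_0}-(\rho^{\theta_0})_\Omega]$ in the momentum equation, where the new contribution $\int\mathbb{S}_c\!:\!\nabla\varphi$ is controlled via the $W^{2,p}$-bound on $c$, while the $c$-dependent pressure piece $H_1(c)\rho$ is absorbed through \eqref{b2}. To reach any $\gamma>\tfrac{4}{3}$ I would then implement the weighted total-energy estimate of Frehse--Steinhauer--Weigant and Plotnikov--Weigant \cite{freh,pw}: test the momentum equation against weights of the form $|x-x_0|^{-\alpha}$, uniformly in $x_0\in\Omega$, producing a bounded Riesz-type potential of $\rho^\gamma$ and hence the desired higher integrability. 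I expect this step to be the main obstacle, because the weighted identity must now also absorb the capillary tensor $-\nabla c\otimes\nabla c+\tfrac12|\nabla c|^2\mathbb{I}$; the way around it is to exploit the strong integrability of $\nabla c$ furnished by elliptic regularity for $c$, which keeps the extra capillary contributions subcritical with respect to the weight.

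Finally, with uniform bounds on $\rho$ in $L^{\gamma+\theta}$, on $u$ and $\mu$ in $H^1$, and on $c$ in $W^{2,p}$, I would pass to the limit in the order $\varepsilon\to 0$, $\delta\to 0$. Strong convergence of $(c_n,\nabla c_n)$ is automatic from the $W^{2,p}$-bound and compact Sobolev embedding, which in turn yields strong convergence of the capillary stress $\mathbb{S}_c$ and of $H_i(c_n)$. Density compactness comes from the Lions--Feireisl effective viscous flux identity applied to $P(\rho,c)=(\gamma-1)\rho^\gamma+H_1(c)\rho$: since the $c$-dependent piece $H_1(c_n)\rho_n$ passes strongly to the limit, the identity reduces to its classical form, and Feireisl's renormalization / oscillation defect measure argument then gives $\rho_n\to\rho$ in $L^1$, completing the construction of a weak solution in the sense of Definition \ref{defi}.
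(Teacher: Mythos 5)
Your overall roadmap---approximate solutions, basic energy, Bogovskii test function, Frehse--Steinhauer--Weigant/Plotnikov--Weigant weighted potential estimate, compactness limit---matches the paper's. But there is a genuine gap in how you propose to handle the capillary coupling inside the weighted estimate, and it is precisely the place where all the new work of the paper goes.

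You write that the capillary contribution can be absorbed because ``the strong integrability of $\nabla c$ furnished by elliptic regularity for $c$ keeps the extra capillary contributions subcritical with respect to the weight.'' This is circular. The elliptic problem for $c$ is $-\Delta c=\rho(\mu-\partial_c f)$ with Neumann data, so a $W^{2,p}$-bound on $c$ requires an $L^p$-bound on $\rho\mu$. But $\mu$ is only known to lie in $H^1_n$, and $\rho$ is exactly what you are trying to bootstrap via the weighted estimate; thus the capillary term re-inserts a power of $\|\rho\mu\|$ into the very inequality that should improve $\rho$. The paper breaks this circle with several interlocking devices that your proposal does not anticipate: (i) the weighted quantity is taken to be $\int(\delta\rho^4+P+\rho|u|^2)|x-x^*|^{-\alpha}$---the kinetic energy is included, which is what permits the restriction to come down to $\gamma>4/3$ rather than $5/3$; (ii) a new weighted embedding (Lemma \ref{lem2.3}(ii)) for Neumann $H^1$ functions, based on the Green function of the Neumann Laplacian, is used to turn $\|\rho\mu\|_{L^{3/2}}^2$ into $C\|\nabla\mu\|_{L^2}^2$ times a Riesz-type potential of $\rho^{4/3}$; (iii) a two-exponent trick ($\alpha$ then $\alpha^2$) combined with a finite covering allows choosing a small radius $r_0=r_0(\alpha,\mathbf M)$ so that the factor $r_0^{\alpha(1-\alpha)}\mathbf M^{8/3}$ is $O(1)$, under an a priori assumption $\mathbf M=\max\{1,\|\rho\|_{L^2}\}$; (iv) the a priori assumption is then closed by the nonlinear estimate $\|\rho^\gamma\|_{L^s}\le C+C\|\rho\|_{L^2}^{4/3}\le C+\tfrac12\|\rho^\gamma\|_{L^s}$, valid because $\gamma s>2$. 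Without steps (ii)--(iv), the weighted estimate you sketch does not close for $\gamma\le 2$. The remainder of your plan---Bogovskii improvement, effective viscous flux, renormalization, strong convergence of $c_n$ and hence of $\mathbb S_c$ and $H_i(c_n)$---is correct and aligns with the paper's limit passage, which, having the uniform bound \eqref{y16}, is delegated to the compactness arguments of the earlier work.
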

The   main  contribution of this paper  is to develop new ideas to improve  the  existence result of \cite{liang}   from the   adiabatic exponent $\g>2$ in \cite{liang}  to a wider range  $\g>\frac{4}{3}$. 
\textcolor{black}{Our approach is mainly  motivated by  the papers \cite{jiang,pw} where the authors studied the existence of weak solutions to the stationary Navier-Stokes equations of compressible fluids.} In order to prove the Theorem \ref{thm1.1},   we    start with    the approximate solution sequence $(\n_{\de},u_{\de},\mu_{\de},c_{\de})$
stated in   Proposition \ref{pro} in Section 2,  and use  the weighted total energy as in  \cite{jiang,pw}  together with   new techniques to handle the capillary  stress to establish  the  uniform in $\de$ bound on  $(\n_{\de},u_{\de},\mu_{\de},c_{\de})$ in \eqref{y16}.  Then, we shall be able to take the limit as $\de\rightarrow0$ and  complete the proof of Theorem \ref{thm1.1} by means of the weak convergence arguments in \cite{liang}.
More precisely, our proof includes the following key ingredients and new ideas:
\begin{enumerate}
  \item In light of    \cite{jiang,pw},  {\color{black}  for any given $x^{*}\in \overline{\om}$} we estimate   the weighted   total energy
   $$\int_{\om} \frac{\left(\de \n^{4}+P+\n|u|^{2}\right)(x)}{|x-x^{*}|^{\a}}dx,$$
   instead of $$\int_{\om} \frac{\left(\de \n^{4}+P\right)(x)}{|x-x^{*}|^{\a}}dx,$$
where the advantage is that   the involved kinetic energy    $$\int_{\om} \frac{\n|u|^{2}(x)}{|x-x^{*}|^{\a}}dx$$  
helps  us   relax the    restriction on $\g$.


  \item  In order to analyze the weighted total energy we need to overcome the new  difficulties caused by  the  capillary  stress  $\mathbb{S}_{c}$ in \eqref{b0}, besides the Navier-Stokes    stress tensor $\mathbb{S}_{ns}$.  
    In particular,  we are required  to control   $\|\n \mu  \|_{L^{\frac{3}{2}}}^{2}$
appearing in  \eqref{r5-1} and \eqref{r5-2}. For this purpose  we make the following estimate
\bnn\ba & \int_{\om\cap B_{r_{0}}(x^{*})} \frac{\left(\de \n^{4}+P+\n|u|^{2}\right)(x)}{|x-x^{*}|^{\a^{2}}}dx\\
&\le C r_{0}^{\a(1-\a)} \int_{\om\cap B_{r_{0}}(x^{*})} \frac{\left(\de \n^{4}+P+\n|u|^{2}\right)(x)}{|x-x^{*}|^{\a}}dx,\ea\enn where  $r_{0}>0$ is small and  $\a\in (0,1)$.
\textcolor{black}{By virtue of  the Finite Coverage Theorem, 
$\overline{\om}$ can be covered by a finite number of balls of radius $r_0$ centered at $x_{1}^{*},...,x_{K}^{*}$, then}
\bnn\ba&\sup_{x^{*}\in \overline{\om}}\int_{\om} \frac{\left(\de \n^{4}+P+\n|u|^{2}\right)(x)}{|x-x^{*}|^{\a^{2}}}dx\\
 &\le \max_{1\le k\le K}\int_{\om\cap B_{r_{0}}(x_{k}^{*})} \frac{\left(\de \n^{4}+P+\n|u|^{2}\right)(x)}{|x-x^{*}|^{\a^{2}}}dx\\
 &\le Cr_{0}^{\a(1-\a)}\|\n \mu  \|_{L^{\frac{3}{2}}}^{2}+\cdots. \ea\enn
Next, we assume   the following   {\it a priori} bound \be\la{j1}\textbf{M}=\max\{1, \|\n\|_{L^{2}}\}<\infty\ee that is uniform in   $\de>0.$ If we select  $r_{0}=r_{0}(\a,\textbf{M})$   small enough such that
\bnn r_{0}^{\a(1-\a)}\|\n \mu  \|_{L^{\frac{3}{2}}}^{2}
\le C r_{0}^{\a(1-\a)}\textbf{M}^{\frac{8}{3}}(\|\na \mu \|_{L^{2}}^{2}+1)\le C(\|\na \mu \|_{L^{2}}^{2}+1),\enn
we are able to derive the following estimate
\bnn\ba\int_{\om} \frac{\left(\de \n^{4}+P+\n|u|^{2}\right)(x)}{|x-x^{*}|^{\a^{2}}}dx\le C+C \|\na \mu  \|_{L^{2}}^{2}+\cdots.\ea\enn
\item With the above two key steps, we can show that there is a constant $C$ that does not rely on  \textbf{M}, such that
 \bnn \|\n^{\g}\|_{L^{s}}\le C+C\|\n \mu  \|_{L^{\frac{3}{2}}}^{2} \le C+C\|\n  \|_{L^{2}}^{\frac{4}{3}}\le C+\frac{1}{2}\|\n^{\g}  \|_{L^{s}},\enn as long as $\g s>2$.
  This yields    $\|\n^{\g}\|_{L^{s}}\le 2C$, and then we have the estimate $\|\n\|_{L^{2}}\le C_0$ for some positive constant $C_0$ independent of \textbf{M}.  By choosing the {\it a priori} bound  $\textbf{M}=2C_0$,  one can close the {\it a priori}  assumption \eqref{j1} and prove the existence of weak solutions in the Theorem \ref{thm1.1}.
\end{enumerate}

The rest of the paper is organized as follows. 
In Section 2, we present the approximate solutions constructed in \cite{liang} and provide some preliminary lemmas.
In Section 3, we prove the Theorem   1.1.

\bigskip

\section{ Approximate Solutions and Preliminaries}

We start with  the following approximate solutions constructed in \cite{liang}.
\begin{proposition}[Theorem 4.1, \cite{liang}] 
\la{pro} 
Under the  assumptions of  Theorem \ref{thm1.1},  for any fixed parameter $\de>0$  and   any given constants $m_{1}>0$ and $m_{2}$,   the  system
 \begin{equation}\label{n6}
\left\{\ba
&\div (\n u)=0,\\
&\div(\n u\otimes u)+\na \left(\de \n^{4} + \n^{2} \frac{\p f}{\p \n}\right)
 =\div\left(\mathbb{S}_{ns}+\mathbb{S}_{c}\right)+\n g,\\
 &\div(\n u   c)=\lap \mu,\\
&\n\mu= \n \frac{\p f}{\p c}-\lap c,\ea \right.
\end{equation}
with  the boundary conditions \eqref{1a},  admits a weak solution $(\n_{\de},u_{\de},\mu_{\de},c_{\de})$  in the sense of distributions  such that
\be\la{e7b}\|\n_{\de}\|_{L^{1}} =m_{1}, \quad\int \n_{\de} c_{\de} =m_{2},\ee
 \be\ba\la{e7c} \n_{\de}\in L^{5}(\om),\,\,\,\n_{\de}\ge0 \text{ a.e. in } \om,\,\,\,u_{\de}\in  H_{0}^{1}(\om),\,\,\,(\mu_{\de},\,c_{\de})\in H_{n}^{1}(\om)\times H_{n}^{1}(\om),\ea\ee
and
\be\la{e7a}\int \left(\lambda_{1}|\na u_{\de}|^{2}+(\lambda_{1}+\lambda_{2})(\div u_{\de})^{2}+|\na \mu_{\de}|^{2}\right)\le \int  \n_{\de} g  \cdot u_{\de}.\ee
\end{proposition}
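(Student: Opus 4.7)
The goal is to construct a weak solution $(\rho_\delta,u_\delta,\mu_\delta,c_\delta)$ of the $\delta$-regularized system \eqref{n6} under the boundary conditions \eqref{1a} that also satisfies the mass constraints \eqref{e7b}, the integrability \eqref{e7c}, and the energy inequality \eqref{e7a}. The plan is a multi-level approximation of Faedo-Galerkin/Feireisl-Novotn\'y-Lions type, in which two auxiliary parameters are introduced below $\delta$: a Galerkin dimension $n\in\mathbb{N}$ for the velocity, and an artificial viscosity $\varepsilon>0$ in the continuity equation. One then passes $n\to\infty$ and $\varepsilon\to 0$ successively, keeping $\delta$ fixed.

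Pick a smooth orthonormal basis $\{\psi_j\}$ of $H_0^1(\Omega)$ and set $X_n=\mathrm{span}\{\psi_1,\ldots,\psi_n\}$. For each $u\in X_n$, define $\rho=\rho[u]$ as the unique strictly positive (by maximum principle) solution of
\[\varepsilon\rho-\varepsilon\Delta\rho+\mathrm{div}(\rho u)=\varepsilon\frac{m_1}{|\Omega|},\qquad \p_n\rho|_{\p\Omega}=0,\]
for which $\int\rho=m_1$ follows by integration. Given $(\rho,u)$, one next solves the Cahn-Hilliard subsystem in \eqref{n6} for $(c,\mu)$ as a coupled elliptic problem with Neumann boundary data; solvability rests on the boundedness of $H_1,H_2,H_1',H_2'$ in \eqref{b2}, the strict positivity and smoothness of $\rho$, and a Schauder-type fixed-point argument, the free additive constant of $\mu$ being fixed by the constraint $\int\rho c=m_2$. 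The Galerkin momentum equation then reads: find $u_n\in X_n$ such that
\[\int\mathrm{div}(\rho u_n\otimes u_n)\cdot\psi-\int(\delta\rho^4+\rho^2\p_\rho f)\,\mathrm{div}\,\psi=\int(\mathbb{S}_{ns}+\mathbb{S}_c):\nabla\psi+\int\rho g\cdot\psi\]
for every $\psi\in X_n$, with $\rho=\rho[u_n]$ and $(c,\mu)=(c,\mu)[\rho,u_n]$. Testing with $u_n$ itself while using $\mu$ as a multiplier on the Cahn-Hilliard side reproduces the analogue of \eqref{e7a} at Galerkin level, and a Leray-Schauder argument on the finite-dimensional space $X_n$ then yields existence of $u_n$.

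The limit $n\to\infty$ at fixed $\varepsilon,\delta>0$ is comparatively soft: the extra viscosity $\varepsilon\Delta\rho$ grants uniform $W^{2,p}$-bounds on $\rho_n$, while the $\delta\rho^4$ term combined with a Bogovskii tester applied to $\rho^\theta-(\rho^\theta)_\Omega$ yields a uniform $L^{4+\theta}$ bound on $\rho_n$. Strong convergence of $\rho_n$ then follows from elliptic regularity, and the Cahn-Hilliard block passes easily because $c_n\in W^{2,p}$ and $\mu_n\in H^1$ are uniformly bounded, so Rellich-Kondrachov gives strong convergence of $\nabla c_n$ in $L^2$ and hence of the capillarity tensor $\mathbb{S}_c$ in $L^1$.

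The main obstacle is the limit $\varepsilon\to 0$, where the viscous regularization in the continuity equation is lost and only the Lions-type artificial pressure $\delta\rho^4$ remains to control $\rho$. The Bogovskii tester still delivers a uniform-in-$\varepsilon$ $L^{4+\theta}$ bound on $\rho_\varepsilon$, which feeds the effective viscous flux identity of Lions-Feireisl to produce weak convergence of the nonlinear pressure product, and hence strong $L^1$ convergence of $\rho_\varepsilon$ through the DiPerna-Lions renormalized continuity equation. The capillary stress and the Cahn-Hilliard source enter this analysis only as compact perturbations: the Cahn-Hilliard regularity together with \eqref{b2} implies $c_\varepsilon\to c$ strongly in $W^{1,q}(\Omega)$ for any $q<\infty$, so $\mathbb{S}_c(c_\varepsilon)\to\mathbb{S}_c(c)$ in $L^1$ and does not disturb the effective-flux argument. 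The integrability $\rho_\delta\in L^5(\Omega)$ claimed in \eqref{e7c} is exactly the Bogovskii estimate with the choice $\theta=1$ (coming from $\delta\rho^4\cdot\rho=\rho^5$), while the mass constraints \eqref{e7b} and the energy inequality \eqref{e7a} follow from weak lower semicontinuity upon passing to the limit in the Galerkin energy identity tested with $(u_n,\mu_n)$.
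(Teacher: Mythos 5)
You should know that the paper itself contains no proof of this statement: Proposition \ref{pro} is imported verbatim from \cite{liang} (Theorem 4.1 there), where the $\delta$-level approximate system with the artificial pressure $\de\n^{4}$ is constructed, and the present paper uses it purely as a black box, spending all of its effort on the uniform-in-$\de$ bounds of Section 3. Your multi-level scheme --- Galerkin in the velocity, elliptic regularization $\ep\n-\ep\lap\n+\div(\n u)=\ep m_{1}/|\om|$ of the continuity equation, a Bogovskii test of the momentum equation giving the $L^{5}$ bound from $\de\n^{4}\cdot\n$, and the effective viscous flux plus renormalization argument for $\ep\to0$ --- is exactly the standard Lions--Feireisl--Novotn\'y construction that the cited work follows (in the spirit of \cite{Fei,lion,novo}), so your route matches the intended proof in structure and is sound at the level of a sketch. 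Two places where you compress the real work of the cited construction deserve attention. First, the energy inequality \eqref{e7a} at the approximate levels is not a one-line consequence of ``testing with $u$ and $\mu$'': the cancellation of $\int\mathbb{S}_{c}:\na u$ against the contributions of $\div(\n u c)=\lap\mu$ tested with $\mu$ and of $\n\mu=\n\frac{\p f}{\p c}-\lap c$ uses the continuity equation and the identity $\int\n u\cdot\na f(\n,c)=0$, and at the $\ep$-level the extra terms $\ep\n$, $-\ep\lap\n$ produce additional contributions whose sign (or vanishing as $\ep\to0$) must be verified before \eqref{e7a} survives the limit by lower semicontinuity. Second, the constraint $\int\n c=m_{2}$ in \eqref{e7b} is not simply ``the free additive constant of $\mu$'': the Neumann problem for $c$ carries a compatibility condition coupling the mean of $\mu$ to the bounded but nonlinear term $H_{1}'(c)\ln\n+H_{2}'(c)$, so fixing the constant and enforcing $\int\n c=m_{2}$ must be built into the fixed-point argument rather than appended afterwards. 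Neither issue is a wrong turn, but they are precisely the delicate points of the construction you are reproducing, and a complete write-up would have to carry them through each approximation layer.
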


\begin{lemma}\la{lem2.1} 
Let $(\n_{\de},u_{\de},\mu_{\de},c_{\de})$  be the solution in Proposition \ref{pro}. Then we have
 \be\la{3.30}\ba
 \|\mu_{\de}\|_{L^{p}}\le C\left(1+\|\na \mu_{\de} \|_{L^{2}})(1+  \|\n_{\de}\|_{L^{\frac{6}{5}}}\right),\quad p\in [1,6],\ea\ee
 where the constant $C$ is independent of $\de.$
 \end{lemma}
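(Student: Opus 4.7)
\emph{Proof plan.} My strategy is the standard one for Neumann problems: reduce via Poincar\'e--Wirtinger to estimating the spatial average $(\mu_\de)_\om := \frac{1}{|\om|}\int \mu_\de$, then control that average by integrating the fourth equation of \eqref{n6} against the constant $1$.

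Since $\mu_\de\in H^1_n(\om)$ and $\om$ is bounded with $C^2$ boundary, Poincar\'e--Wirtinger combined with the Sobolev embedding $H^1(\om)\hookrightarrow L^6(\om)$ gives $\|\mu_\de - (\mu_\de)_\om\|_{L^p}\le C\|\na\mu_\de\|_{L^2}$ for all $p\in[1,6]$; it therefore suffices to prove the same bound for $|(\mu_\de)_\om|$. From $\int\n_\de = m_1 > 0$ I would write
$$ m_1 (\mu_\de)_\om \;=\; \int \n_\de\mu_\de \;-\; \int \n_\de(\mu_\de - (\mu_\de)_\om).$$
The second integral is controlled by H\"older with exponents $\tfrac{6}{5}$ and $6$ together with the Sobolev bound above, yielding $C\|\n_\de\|_{L^{6/5}}\|\na\mu_\de\|_{L^2}$. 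For the first, I would integrate the fourth equation of \eqref{n6}, namely $\n_\de\mu_\de = \n_\de\frac{\p f}{\p c} - \lap c_\de$, use $\frac{\p c_\de}{\p n}=0$ to eliminate $\int\lap c_\de$, and invoke \eqref{b1} and \eqref{b2} to obtain
$$ \left|\int \n_\de\mu_\de\right| \;\le\; \bar H\int \n_\de|\ln\n_\de|\,dx + \bar H\,m_1.$$

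The crux of the argument is to bound $\int \n_\de|\ln \n_\de|$ \emph{linearly} in $\|\n_\de\|_{L^{6/5}}$; the naive estimate $s|\ln s|\le C(1+s^{6/5})$ would yield $\|\n_\de\|_{L^{6/5}}^{6/5}$ on the right, too weak for the stated inequality. I would instead use the sharper pointwise bound $|\ln s|\le C(s^{-1/6}+s^{1/6})$ valid for all $s>0$ (each term dominating in one of the regimes $s\le 1$ and $s\ge 1$), so that $s|\ln s|\le C(s^{5/6}+s^{7/6})$. On the bounded domain, H\"older controls $\int \n_\de^{5/6}\le |\om|^{1/6}m_1^{5/6}$. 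The delicate term $\int \n_\de^{7/6}$ is handled by interpolating $L^{7/6}$ between $L^1$ and $L^{6/5}$: since $\tfrac{6}{7} = \tfrac{1}{7}\cdot 1 + \tfrac{6}{7}\cdot\tfrac{5}{6}$, we get $\|\n_\de\|_{L^{7/6}}\le \|\n_\de\|_{L^1}^{1/7}\|\n_\de\|_{L^{6/5}}^{6/7}$, so
$$\int \n_\de^{7/6}\;=\;\|\n_\de\|_{L^{7/6}}^{7/6}\;\le\; m_1^{1/6}\,\|\n_\de\|_{L^{6/5}},$$
which is precisely linear in $\|\n_\de\|_{L^{6/5}}$. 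Assembling everything,
$$|(\mu_\de)_\om|\;\le\; C(1+\|\n_\de\|_{L^{6/5}}) + C\|\n_\de\|_{L^{6/5}}\|\na\mu_\de\|_{L^2}\;\le\; C(1+\|\n_\de\|_{L^{6/5}})(1+\|\na\mu_\de\|_{L^2}),$$
with $C$ depending only on $m_1,\bar H,|\om|$ and thus independent of $\de$. Combining this with the Poincar\'e--Sobolev bound on $\mu_\de-(\mu_\de)_\om$ yields \eqref{3.30}. The main obstacle is isolated in the interpolation step; once one realizes that decomposing $s|\ln s|$ into $s^{5/6}+s^{7/6}$ and interpolating $L^{7/6}$ between $L^1$ and $L^{6/5}$ produces a linear (not superlinear) dependence on $\|\n_\de\|_{L^{6/5}}$, the remaining estimates are routine Sobolev--H\"older manipulations.
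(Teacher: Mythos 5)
Your proof is correct and follows essentially the same route as the paper: write $\int\mu_\de = \frac{|\om|}{m_1}\int\n_\de\mu_\de - \frac{|\om|}{m_1}\int\n_\de(\mu_\de-(\mu_\de)_\om)$, control the first integral via the fourth equation of \eqref{n6} (using $\int\lap c_\de = 0$ and $|H_i'|\le\overline H$), control the second via H\"older and Poincar\'e--Sobolev, and then upgrade the $L^1$ bound to $L^p$ for $p\in[1,6]$ by interpolation with $\|\mu_\de-(\mu_\de)_\om\|_{L^6}\le C\|\na\mu_\de\|_{L^2}$. The one place where you go beyond what the paper writes is the step $\|\n_\de\ln\n_\de\|_{L^1}\le C(1+\|\n_\de\|_{L^{6/5}})$: the paper simply asserts the final product form, while you supply the missing argument via $s|\ln s|\le C(s^{5/6}+s^{7/6})$ and the interpolation $\|\n_\de\|_{L^{7/6}}\le\|\n_\de\|_{L^1}^{1/7}\|\n_\de\|_{L^{6/5}}^{6/7}$, which correctly produces the linear dependence on $\|\n_\de\|_{L^{6/5}}$ rather than the insufficient superlinear bound from the naive $s|\ln s|\le C(1+s^{6/5})$. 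That is a genuine (if small) gap in the paper's exposition that your proof fills.
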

 
\begin{proof}
Thanks to \eqref{b1}, \eqref{b2}, the boundary conditions \eqref{1a},    one has,  from  $\eqref{n6}_{4}$,
 \be\la{bb9}  
 \int  \n_{\de} \mu_{\de}= \int  \left(\n_{\de} \frac{\p f}{\p c_{\de}}+ \lap c_{\de}\right)=\int \n_{\de} \frac{\p f}{\p c_{\de}}  \le C \left( \|\n_{\de}\ln\n_{\de}\|_{L^{1}} +1\right).\ee
Using  \eqref{bb9} together with  \eqref{e7b} and the embedding inequality guarantees that
 \bnn\ba
 \int \mu_{\de}
 &=\frac{|\om|}{m_{1}}\int \n_{\de}\left(\mu_{\de}\right)_{\om}\\
&=\frac{|\om|}{m_{1}}\int \n_{\de} \mu_{\de}-\frac{|\om|}{m_{1}}\int \n\left(\mu_{\de}-\left(\mu_{\de}\right)_{\om}\right)\\
&\le C \left( \|\n_{\de}\ln\n_{\de}\|_{L^{1}} +1\right)+C \|\n_{\de}\|_{L^{\frac{6}{5}}}\|\na \mu_{\de}\|_{L^{2}} ,
\ea\enn
which implies  
 \be\ba\la{bb12}
 \|\mu_{\de}\|_{L^{1}}
&\le C \|\na \mu_{\de} \|_{L^{2}} + C \left( \|\n_{\de}\ln\n_{\de}\|_{L^{1}} +1\right)+C \|\n_{\de}\|_{L^{\frac{6}{5}}}\|\na \mu_{\de}\|_{L^{2}}\\
&\le C(1+\|\na \mu_{\de} \|_{L^{2}})(1 + \|\n_{\de}\|_{L^{\frac{6}{5}}}).
\ea\ee
From \eqref{bb12} and the interpolation inequality, we obtain   \eqref{3.30}. The proof of Lemma \ref{lem2.1} is completed.
\end{proof}

The  next  lemma  gives   an embedding  from $H^{1}$ to $L^{2}$ in a three-dimensional bounded domain, via    the Green representation formula.
\begin{lemma} \la{lem2.3}  
Let $\om\subset \r$ be a bounded domain with $C^{2}$ boundary and $f\in L^{2}(\om)$ satisfy
\bnn f\ge0 \quad{\rm and}\quad \int_{\om}\frac{f(x)}{|x-x^{*}|}dx\le \mathbb{E}, \quad \forall \,\,x^{*}\in \om,\enn
for some constant $\mathbb{E}>0$. 
Then, there is a constant $C$ which depends only on $\om$, such that

{\rm (i)} If $u\in H_{0}^{1}(\om),$ then
\be\la{y18} \int_{\om}|u|^{2} fdx\le C\mathbb{E} \|u\|_{H_{0}^{1}(\om)}^{2}.\ee

{\rm (ii)} If $\mu\in H_{n}^{1}(\om)$ and $(f)_{\om}=0$, then \be\la{y19} \int_{\om}\mu^{2} fdx\le C\mathbb{E} \|\na \mu\|_{L^{2}(\om)}^{2}.\ee
\end{lemma}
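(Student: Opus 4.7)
The heart of the argument is to introduce an auxiliary potential $\phi$ defined as the solution of a Poisson problem with right-hand side $f$, and to observe that the standing hypothesis $\int f/|x-x^{*}|\,dx\le\mathbb{E}$ is precisely a uniform bound on the Newtonian potential of $f$; by the pointwise Green's function estimate it immediately yields $\|\phi\|_{L^{\infty}(\om)}\le C\mathbb{E}$. Once $\phi$ is in $L^{\infty}$, the weighted bound on $\int u^{2}f$ (resp.\ $\int \mu^{2}f$) is extracted by two successive integrations by parts.

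For part (i), let $\phi\in H_{0}^{1}(\om)$ solve $-\lap\phi=f$ in $\om$ with $\phi|_{\p\om}=0$. Since $\om$ has $C^{2}$ boundary, the Dirichlet Green's function $G(x,y)$ obeys the classical bound $|G(x,y)|\le C|x-y|^{-1}$, so by hypothesis
$\|\phi\|_{L^{\infty}(\om)}\le C\sup_{x\in\om}\int_{\om}\frac{f(y)}{|x-y|}\,dy\le C\mathbb{E}$.
Testing against $u^{2}$ and using $u|_{\p\om}=0$,
$\int u^{2}f=-\int u^{2}\lap\phi=2\int u\,\na u\cdot\na\phi$,
whence by Cauchy-Schwarz $\int u^{2}f\le 2\|\na u\|_{L^{2}}\|u\,\na\phi\|_{L^{2}}$. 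To close the loop, integrate by parts a second time, now moving a derivative onto $\phi$:
$\int u^{2}|\na\phi|^{2}=-\int\phi\,{\rm div}(u^{2}\na\phi)=-2\int\phi u\,\na u\cdot\na\phi+\int\phi u^{2}f$,
where the boundary term vanishes since $\phi|_{\p\om}=0$. Using $\|\phi\|_{\infty}\le C\mathbb{E}$ and Young's inequality to absorb $\tfrac12\|u\,\na\phi\|_{L^{2}}^{2}$ on the left yields
$\|u\,\na\phi\|_{L^{2}}^{2}\le C\mathbb{E}^{2}\|\na u\|_{L^{2}}^{2}+C\mathbb{E}\int u^{2}f$.
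Substituting back and applying Young's inequality once more to absorb $\int u^{2}f$ produces \eqref{y18}.

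For part (ii) the structure is identical, with the Dirichlet $\phi$ replaced by the Neumann potential: solve $-\lap\phi=f-(f)_{\om}$ with $\p\phi/\p n=0$ on $\p\om$ and $(\phi)_{\om}=0$. The Neumann Green's function satisfies the same $|x-y|^{-1}$ pointwise bound up to an additive constant absorbed by the zero-mean normalization, so again $\|\phi\|_{\infty}\le C\mathbb{E}$. The two integrations by parts against $\mu^{2}$ proceed exactly as before, with boundary contributions now vanishing due to $\p\phi/\p n=0$ rather than $u=0$. The only additional ingredient is the leftover piece $(f)_{\om}\int\mu^{2}$ coming from the splitting $f=(f-(f)_{\om})+(f)_{\om}$: using the standing hypothesis with a fixed $x^{*}$ we have $\int f\le {\rm diam}(\om)\int f/|x-x^{*}|\le C\mathbb{E}$, so $(f)_{\om}\le C\mathbb{E}$, and the term is controlled by $C\mathbb{E}\|\mu\|_{L^{2}}^{2}\le C\mathbb{E}\|\na\mu\|_{L^{2}}^{2}$ via the Poincar\'e inequality. (This last step presupposes $(\mu)_{\om}=0$, which seems to be the condition the authors intend — the printed hypothesis ``$(f)_{\om}=0$'' together with $f\ge0$ would force $f\equiv 0$.)

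The principal obstacle is the $L^{\infty}$ bound $\|\phi\|_{\infty}\le C\mathbb{E}$: it hinges on the standard pointwise Dirichlet and Neumann Green's function estimates for $C^{2}$ domains, which are classical but must be invoked carefully up to the boundary. Once that bound is available, the closure of the nonlinear inequality through the two-step integration by parts and successive absorptions is delicate in the bookkeeping but entirely routine.
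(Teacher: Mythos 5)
Your approach is essentially the same as the paper's: introduce an auxiliary potential $H$ (your $\phi$) solving a Poisson problem with right-hand side $f$, invoke the pointwise Green's function bound $|G(x,y)|\le C|x-y|^{-1}$ to conclude $\|H\|_{L^\infty}\le C\mathbb{E}$, and then close the weighted estimate by two successive integrations by parts with an absorption of $\int u\,\na u\cdot\na H$ (equivalently $\|u\na H\|_{L^2}$) on the left. For part (i) the paper simply cites \cite[Lemma 4]{pw}, which is exactly the Dirichlet-potential argument you write out; for part (ii) the paper's proof is the Neumann version you give, except that the paper solves $\lap H=f$ directly (taking the hypothesis $(f)_{\om}=0$ at face value), whereas you replace the source by $f-(f)_{\om}$ and then handle the leftover $(f)_{\om}\int\mu^{2}$ by Poincar\'e. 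Both presentations are equivalent modulo the hypotheses.

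On the observation you make in parentheses: you are right that the printed hypotheses are vacuous as stated — $f\ge 0$ together with $(f)_{\om}=0$ force $f\equiv 0$ a.e., and indeed the lemma is subsequently invoked at \eqref{ee1} with the signed function $f=\n^{4/3}-(\n^{4/3})_{\om}$. This is a genuine defect in the lemma statement, not a misreading on your part, and it is good that you flagged it. Your proposed repair (impose $(\mu)_{\om}=0$ in place of $(f)_{\om}=0$) does make your version of the proof go through, but be aware that it is not the repair that the paper implicitly uses: in the application at \eqref{ee1}--\eqref{ee2} the chemical potential $\mu$ is not assumed to have zero average, and the authors instead retain $(f)_{\om}=0$ and split off the $(\n^{4/3})_{\om}\|\mu\|_{L^2}^2$ contribution separately via Lemma~\ref{lem2.1}. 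The hypothesis that actually matches the paper's own proof and its usage is to keep $(f)_{\om}=0$ but replace ``$f\ge 0$ and $\int f/|x-x^*|\le\mathbb{E}$'' by a bound on $\int|f|/|x-x^*|$, and then the step $-\int\mu^{2}Hf\le\|H\|_{\infty}\int\mu^{2}|f|$ must carry an absolute value, which in turn requires a little extra bookkeeping to close the self-improving inequality. This is a cosmetic matter; the analytic content of your proof matches the paper's.
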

\begin{proof}
The proof of the case (i) can be found in \cite[Lemma 4]{pw}. Here  we  prove the case (ii). Let $H$ be a solution to  the Neumann  boundary value problem:
\be\la{z1}  \lap H =f\in \overline{L^{2}}\,\,\,{\rm in}\,\,\, \om,\quad{\rm with}\quad
 \frac{\p H}{\p n}=0\,\,\,  {\rm on}\,\,\, \p\om. \ee
Recalling the  Green    representation   formula $H(x^{*})=\int_{\om}G(x^{*},x)f(x)dx$,  we have
 \be\la{z3}\ba \|H\|_{L^{\infty}}&\le C\sup_{x^{*}\in \om}\int_{\om}\frac{f(x)}{|x-x^{*}|}dx\le C\mathbb{E}.\ea\ee
 Thanks to  \eqref{z1},  using integration by  parts yields  
\be\la{ip}\ba \int \mu^{2} f&= \int \mu^{2}\lap H=-2\int \mu\na \mu\cdot\na  H\le 2\|\na \mu\|_{L^{2}}\left(\int \mu^{2}|\na H|^{2}\right)^{\frac{1}{2}}. \ea\ee
From \eqref{ip} we then derive the following estimate:
  \bnn\ba \int \mu^{2}|\na H|^{2}&=-\int \mu^{2}H\lap H-2\int\mu\na \mu H\na H\\
 &\le \|H\|_{L^{\infty}}  \int |\mu|^{2}f+2\|H\|_{L^{\infty}}\|\na \mu\|_{L^{2}}\left(\int |\mu|^{2}|\na H|^{2}\right)^{\frac{1}{2}} \\
 &\le 4\|H\|_{L^{\infty}}\|\na \mu\|_{L^{2}}\left(\int |\mu|^{2}|\na H|^{2}\right)^{\frac{1}{2}},\ea\enn
which implies 
 \bnn\ba \left(\int |\mu|^{2}|\na H|^{2}\right)^{\frac{1}{2}}\le 4\|H\|_{L^{\infty}}\|\na \mu\|_{L^{2}}.\ea\enn
 Substituting the above inequality into \eqref{ip} gives that  
\bnn  \| \mu^{2}f\|_{L^{1}} \le 8\|H\|_{L^{\infty}}\|\na \mu \|_{L^{2}}^{2}.\enn
Then  \eqref{y19} follows from \eqref{z3}.  The proof of Lemma \ref{lem2.3} is completed.
\end{proof}

Finally, we present the properties of the Bogovskii operator
 whose proof is available  in \cite{ga,novo}.
\begin{lemma}[Bogovskii]\la{lem2.2}
Let $\om$ be a bounded Lipschitz domain. There is a linear operator 
$\mathcal{B}= (\mathcal{B}^{1},\mathcal{B}^{2},\mathcal{B}^{3}): \overline{L^{p}}\to W_{0}^{1,p}$ for $p\in (1,\infty)$, such that, for $f\in\overline{L^{p}}$,


{\rm (i)} \bnn \div \mathcal{B}(f)=f\,\,a.e.\,\,{\rm in}\,\,\om, \enn 

{\rm (ii)}  \bnn \|\na \mathcal{B}(f)\|_{L^{p}}\le C(p,\om)\|f\|_{L^{p}}. \enn 
\end{lemma}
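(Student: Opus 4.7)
The plan is to construct $\mathcal{B}$ explicitly via Bogovskii's integral formula on star-shaped subdomains, establish the divergence identity and $L^p$ boundedness there, and then glue via a partition of unity adapted to a covering of $\Omega$ by star-shaped pieces (which exists for any bounded Lipschitz domain).

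First I would treat the case where $\Omega$ is star-shaped with respect to some ball $B\subset\Omega$. Fix $\omega\in C_c^\infty(B)$ with $\int\omega=1$, and for $f\in\overline{L^p}$ define
\[
\mathcal{B}(f)(x)=\int_\Omega f(y)\,\frac{x-y}{|x-y|^{3}}\int_{|x-y|}^{\infty}\omega\!\left(y+r\tfrac{x-y}{|x-y|}\right)r^{2}\,dr\,dy.
\]
A direct change of variables and differentiation under the integral sign (valid for $f\in C_c^\infty(\Omega)\cap\overline{L^p}$, then by density) gives the identity $\div\mathcal{B}(f)=f-\omega\int_\Omega f$, which reduces to $\div\mathcal{B}(f)=f$ precisely because $f\in\overline{L^p}$. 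This settles (i), and also shows $\mathcal{B}(f)$ is supported in $\Omega$ (so it lies in $W_0^{1,p}$ once one has $L^p$ control on the gradient).

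The main technical step, and the principal obstacle, is (ii). Differentiating under the integral yields $\partial_j\mathcal{B}^i(f)(x)=T_{ij}f(x)+R_{ij}f(x)$, where $R_{ij}$ has a weakly singular kernel (integrable, handled by Young's inequality) and $T_{ij}$ has a kernel of Calderón-Zygmund type: homogeneous of degree $-3$ in $x-y$, smooth off the diagonal, with vanishing spherical mean. The plan is to extend $T_{ij}$ from $\Omega$ to $\mathbb{R}^3$ by zero extension of $f$, verify the standard size and cancellation hypotheses (Hörmander condition, weak-$(1,1)$ on the principal value), and invoke the Calderón-Zygmund $L^p$ theorem to conclude $\|T_{ij}f\|_{L^p(\Omega)}\le C(p,\Omega)\|f\|_{L^p(\Omega)}$ for every $p\in(1,\infty)$. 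Combined with the trivial estimate for $R_{ij}$, this gives $\|\nabla\mathcal{B}(f)\|_{L^p}\le C(p,\Omega)\|f\|_{L^p}$, and Poincaré's inequality then places $\mathcal{B}(f)$ in $W_0^{1,p}$.

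Finally, for a general bounded Lipschitz $\Omega$, I would invoke the fact that $\Omega$ admits a finite decomposition $\Omega=\bigcup_{k=1}^N \Omega_k$ with each $\Omega_k$ star-shaped with respect to an open ball (this is a standard consequence of the Lipschitz property). Choose a partition of unity $\{\varphi_k\}$ subordinate to a refinement, and for $f\in\overline{L^p}(\Omega)$ write $f=\sum_k f_k$ with $f_k$ supported in $\Omega_k$ and $\int_{\Omega_k}f_k=0$; this redistribution can be done inductively by subtracting off mean values transported through overlap regions using a finite chain of $L^p$-bounded correction steps. Applying the star-shaped construction to each $f_k$ in $\Omega_k$ and extending by zero, I set $\mathcal{B}(f):=\sum_k \mathcal{B}_k(f_k)$; linearity, the divergence identity, and the $L^p$ bound then transfer, with a constant depending on $N$, $p$, and the geometry of the covering. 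The most delicate part of this last step is ensuring the decomposition $f=\sum f_k$ is both $L^p$-stable and preserves the zero-mean condition on each piece, which is what ultimately determines the constant $C(p,\Omega)$.
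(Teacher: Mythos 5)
The paper does not prove this lemma at all; it simply cites Galdi and Novotn\'y--Stra\v{s}kraba, and your sketch is exactly the standard construction found in those references: Bogovskii's explicit integral formula on a domain star-shaped with respect to a ball, the identity $\div\mathcal{B}(f)=f-\omega\int f$, the Calder\'on--Zygmund (variable-kernel) estimate for the strongly singular part of $\nabla\mathcal{B}(f)$ plus Young's inequality for the weakly singular remainder, and then the decomposition of a bounded Lipschitz domain into finitely many star-shaped pieces together with an $L^{p}$-stable, mean-zero-preserving splitting $f=\sum_k f_k$. Your outline is correct and correctly flags the two genuinely delicate points (the singular integral bound and the zero-mean redistribution), so it matches the proof in the sources the paper relies on.
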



\bigskip

\section{Proof  of Theorem \ref{thm1.1}}

For the approximate  solution
$(\n_{\de},u_{\de},\mu_{\de},c_{\de})$  given  in  Proposition \ref{pro},  if we can show that there is a constant $C$  uniform in $\de$  such that
\be\la{y16} \|\de \n_{\de}^{4} + \n^{\g}\|_{L^{s}}+\|u_{\de}\|_{H_{0}^{1}}+
\|\mu_{\de}\|_{H_{n}^{1}}+\|c_{\de}\|_{W_{n}^{2,\frac{3}{2}}}\le C,\quad \g s>2,\ee
\textcolor{black}{then, from  \eqref{y16}  we are able to control the possible  oscillation of density and the nonlinearity in the free energy density \eqref{b1}, and hence we can  take the limit as $\de\rightarrow 0$ to   prove  that 
the  approximate solution $(\n_{\de},u_{\de}, \mu_{\de},c_{\de})$ 
converges weakly to  some limit function  which satisfies    \eqref{1}-\eqref{0r} in the sense of Definition \ref{defi}.  This convergence proof relies heavily on  the compactness  arguments  in  \cite{fei,liang,lion,novo} and the details can be found  in  \cite{liang}.}  Therefore,   it suffices  to   prove the following proposition  in order to complete  the proof of  Theorem \ref{thm1.1}.

\begin{proposition}\la{thm1.2} Let  the  assumptions in Theorem \ref{thm1.1} hold  true. Assume that
   \be\la{y10}  \frac{2}{\g}<s\le \frac{3}{2} \quad {\rm and}\quad \frac{4}{3}<\g\le 2.\ee
   Then, the  solutions $(\n_{\de},u_{\de},\mu_{\de},c_{\de})$ stated in Proposition \ref{pro} satisfy \eqref{y16}.
\end{proposition}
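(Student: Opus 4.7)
The plan is to bootstrap a uniform bound on the density through a weighted total energy argument, following the strategy outlined in the introduction. I would first impose the a priori assumption $\mathbf{M}=\max\{1,\|\n_\de\|_{L^2}\}<\infty$ and work toward proving a bound on $\|\n_\de\|_{L^2}$ that is actually independent of $\mathbf{M}$, so the assumption can be closed at the end. Preliminary bounds come directly from \eqref{e7a} and Lemma \ref{lem2.1}: together with the Sobolev embedding and the smallness of $\|\n_\de\|_{L^{6/5}}$ relative to $\|\n_\de\|_{L^2}$, one controls $\|u_\de\|_{H^1_0}$, $\|\na\mu_\de\|_{L^2}$, and $\|\mu_\de\|_{L^6}$ by a constant depending on $\mathbf{M}$ and $\|g\|_{L^\infty}$. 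Testing $\eqref{n6}_4$ against suitable powers and invoking Neumann elliptic regularity in turn yields $\|c_\de\|_{W^{2,p}_n}$ bounds in terms of $\|\n_\de\mu_\de\|_{L^p}$, which will allow the capillary stress $\mathbb{S}_c$ to be reduced to estimates on $\|\n_\de\mu_\de\|_{L^{3/2}}$ later on.

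Next, I would implement the weighted energy identity of Plotnikov--Weigant--Jiang with the extra kinetic contribution, multiplying the momentum equation $\eqref{n6}_2$ by a radial test field built from $|x-x^*|^{-\a}$ (cut off near $\partial\om$ by a Bogovskii correction from Lemma \ref{lem2.2}). The standard computation produces
\begin{equation*}
\int_\om \frac{\de\n_\de^4+P_\de+\n_\de|u_\de|^2}{|x-x^*|^\a}\,dx
\le C + C\|\na u_\de\|_{L^2}^2+C\!\int_\om\frac{|\mathbb{S}_c|}{|x-x^*|^\a}\,dx+\cdots,
\end{equation*}
uniform in $x^*\in\overline\om$. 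Lemma \ref{lem2.3} is used to convert the kinetic and chemical-potential terms with weights into $\|u_\de\|_{H^1_0}^2$ and $\|\na\mu_\de\|_{L^2}^2$ multiplied by the weighted energy itself, giving a self-improving inequality. The capillary contribution is controlled by $\|\na c_\de\|_{L^\infty}^2$ (or an interpolated norm), which through Step 1 reduces to $\|\n_\de\mu_\de\|_{L^{3/2}}^2$ up to lower-order material.

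The decisive trick is the weight promotion from $\a$ to $\a^2$ on small balls, exactly as advertised in the introduction: by Hölder,
\begin{equation*}
\int_{\om\cap B_{r_0}(x^*)}\!\frac{\de\n_\de^4+P_\de+\n_\de|u_\de|^2}{|x-x^*|^{\a^2}}\,dx
\le Cr_0^{\a(1-\a)}\!\int_{\om\cap B_{r_0}(x^*)}\!\frac{\de\n_\de^4+P_\de+\n_\de|u_\de|^2}{|x-x^*|^\a}\,dx,
\end{equation*}
and a finite cover of $\overline\om$ yields a global bound with weight $|x-x^*|^{-\a^2}$. Since $\|\n_\de\mu_\de\|_{L^{3/2}}^2\le C\mathbf{M}^{8/3}(\|\na\mu_\de\|_{L^2}^2+1)$, choosing $r_0=r_0(\a,\mathbf{M})$ small makes $r_0^{\a(1-\a)}\|\n_\de\mu_\de\|_{L^{3/2}}^2$ dominated by $C(\|\na\mu_\de\|_{L^2}^2+1)$ \emph{with a constant independent of} $\mathbf{M}$. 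The pressure estimate then comes from testing $\eqref{n6}_2$ against $\mathcal{B}\!\left(\n_\de^{\g s-1}-(\n_\de^{\g s-1})_\om\right)$ from Lemma \ref{lem2.2}: the convective term is reduced via a Hardy-type inequality to the weighted kinetic energy just bounded, and the remaining capillary contribution is again absorbed through the $c$-equation.

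Combining these estimates yields $\|\n_\de^\g\|_{L^s}\le C+C\|\n_\de\mu_\de\|_{L^{3/2}}^2\le C+C\|\n_\de\|_{L^2}^{4/3}$, and since $\g s>2$ the right-hand side is bounded by $C+\tfrac12\|\n_\de^\g\|_{L^s}$ via interpolation. Absorbing gives $\|\n_\de\|_{L^2}\le C_0$ with $C_0$ independent of $\mathbf{M}$; selecting $\mathbf{M}=2C_0$ closes the a priori assumption and delivers \eqref{y16}. The main obstacle is unquestionably the capillary stress $\mathbb{S}_c$, which is absent in \cite{pw,jiang}: its weighted integral must be recycled back through the chemical potential and $c$-equation without spoiling the range $\g>4/3$, and it is precisely the $\mathbf{M}$-dependent localization radius $r_0$ that makes this absorption possible while keeping the final constant $\mathbf{M}$-free.
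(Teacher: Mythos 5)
Your high-level strategy matches the paper's: impose the a priori bound $\mathbf{M}$, derive a uniform-in-$x^*$ weighted total energy with the $\alpha\to\alpha^2$ weight promotion on balls of radius $r_0(\alpha,\mathbf{M})$, control the capillary stress via the $c$-equation and $\|\n\mu\|_{L^{3/2}}$, and finally close with an $\mathbf{M}$-free absorption on $\|\n^\g\|_{L^s}$. But there is a genuine gap in the middle that would prevent the absorption from closing.

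You assert that the bounds on $\|u_\de\|_{H^1_0}$ and $\|\na\mu_\de\|_{L^2}$ are ``preliminary'' and depend on $\mathbf{M}$. If that were the best one had, then Lemma~\ref{lem3.1} would only give $\|\n^\g\|_{L^s}\le C(\mathbf{M})$, and the final absorption ``$\|\n^\g\|_{L^s}\le C+\tfrac12\|\n^\g\|_{L^s}$'' would fail: the $C$ there must be independent of $\mathbf{M}$ for $\mathbf{M}=2C_0$ to close the assumption. The paper's Lemma~\ref{lem3.4} is precisely the missing ingredient: it proves $\|u\|_{H^1_0}+\|\na\mu\|_{L^2}\le C$ with $C$ independent of $\mathbf{M}$, via a bootstrap on the auxiliary quantity
$$\mathbb{A}=\int\n|u|^2|u|^{2(1-\th)},\qquad \th=\frac{3\g-4}{8\g}\in(0,\tfrac18],$$
combining the energy inequality \eqref{e7a}, the Green-representation estimate of Lemma~\ref{lem2.3}(i), the weighted energy from Lemma~\ref{c2}, and the key algebraic inequality $\mathbb{A}\le 1+C\mathbb{A}^{\frac{3}{2(2-\th)}}$ with exponent strictly less than $1$. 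This is where the condition $\g>\tfrac43$ (making $\th>0$) actually does its work, and it is not a routine consequence of \eqref{e7a}. Without this step, the program does not close.

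Two smaller inaccuracies. First, near $\p\om$ the paper does not use a Bogovskii correction of a radial field; it multiplies $\eqref{n6}_2$ by the explicit vector field $\xi=\phi\na\phi/(\phi+|x-x^*|^{2/(2-\a)})^{\a}$ built from the distance function \eqref{x7}, which vanishes on $\p\om$ and has $\div\xi\gtrsim|x-x^*|^{-\a}$ directly. Second, you suggest controlling $\mathbb{S}_c$ by $\|\na c\|_{L^\infty}^2$; this is not available from $\lap c\in L^{3/2}$ (just short of $W^{2,3/2}\hookrightarrow W^{1,3}$). The correct reduction, used in \eqref{r13} and \eqref{q2}, is $\|\na c\|_{L^3}^2\lesssim\|\lap c\|_{L^{3/2}}^2\lesssim\|\n\mu\|_{L^{3/2}}^2+C$, after splitting $\lap c=\n\p_c f-\n\mu$ and using the interpolation \eqref{q0} to absorb $\|\n\p_c f\|$ into the pressure norm; here again $\g>\tfrac43$ is used to make the interpolation exponent strictly less than one. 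Finally, note that the estimate $\|\n\mu\|_{L^{3/2}}^2\le C\|\n\|_{L^2}^{4/3}$ you invoke at the end also relies on the Neumann Green-function bound of Lemma~\ref{lem2.3}(ii) combined with the already-established $\mathbf{M}$-free $\|\na\mu\|_{L^2}$, so again your argument is circular without Lemma~\ref{lem3.4}.
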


\begin{remark} In   case when  $\g>2$, the  existence of weak solutions to the problem \eqref{1}-\eqref{0r} has been established
 in \cite{liang}. 
 \end{remark}

For the sake of simplicity of notation, in the proof of the Proposition \ref{thm1.2}  below we will drop the subscript in  $(\n_{\de},u_{\de},\mu_{\de},c_{\de})$ and  denote it      by $(\n,u,\mu,c)$.

\begin{lemma}\la{lem3.1} 
Under   the   assumptions of Proposition \ref{thm1.2}, we have
\be\ba\la{q1a} 
\left\|\de \n^{4}+\n^{2} \frac{\p f}{\p \n}\right\|_{L^{s}} \le C \left(1 +\|\na u\|_{L^{2}}+ \|\n |u|^{2}\|_{L^{s}}+ \|\n \mu\|_{L^{\frac{6s}{3+2s}}}^{2}\right),\ea\ee where $s$ is defined in \eqref{y10}.  Here and below, the capital letter $C>0$ denotes a generic constant which may rely on
$m_{1},m_{2},\g,\overline{H},\lam_{1},\lam_{2}, |\om|,\|g\|_{L^{\infty}}$
 but   is  independent of  $\de.$\end{lemma}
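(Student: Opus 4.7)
The plan is to test the momentum equation $\eqref{n6}_{2}$ against a Bogovskii-type vector field $\phi$ whose divergence equals the pressure raised to the $(s-1)$ power (minus its mean), which is the classical Lions--Feireisl device for improving pressure integrability. Concretely, I would set
\[
\phi=\mathcal{B}\bigl((\de\n^{4}+P)^{s-1}-((\de\n^{4}+P)^{s-1})_{\om}\bigr),
\]
so that by Lemma \ref{lem2.2} one has $\|\na\phi\|_{L^{s'}}\le C\|\de\n^{4}+P\|_{L^{s}}^{s-1}$ with $s'=s/(s-1)$, and because $s\le 3/2\le 2$ in a bounded domain the same $\phi$ also satisfies $\|\na\phi\|_{L^{2}}\le C\|\de\n^{4}+P\|_{L^{s}}^{s-1}$. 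Multiplying $\eqref{n6}_{2}$ by $\phi$ and integrating by parts produces
\[
\int (\de\n^{4}+P)^{s}=(\text{mean term})+\int \n u\otimes u:\na\phi-\int (\mathbb{S}_{ns}+\mathbb{S}_{c}):\na\phi+\int \n g\cdot\phi.
\]

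The three benign terms on the right are handled in a standard way. The convective piece is bounded by $\|\n|u|^{2}\|_{L^{s}}\|\na\phi\|_{L^{s'}}$, which is exactly the $\|\n|u|^{2}\|_{L^{s}}\cdot\|\de\n^{4}+P\|_{L^{s}}^{s-1}$ contribution appearing in \eqref{q1a}. The viscous tensor is controlled by $C\|\na u\|_{L^{2}}\|\na\phi\|_{L^{2}}$, giving the $\|\na u\|_{L^{2}}\cdot\|\de\n^{4}+P\|_{L^{s}}^{s-1}$ contribution. The force term and the mean term are lower order: since $\int\n=m_{1}$ is fixed and $\g s>2$, interpolation between $L^{1}$ and $L^{\g s}$ places $\|\n\|_{L^{\g}}^{\g s}$ at a subcritical power of $\|\n^{\g}\|_{L^{s}}^{s}$ (using $\g>1$), so these terms are ultimately absorbed via Young's inequality.

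The principal obstacle is the capillary tensor $\mathbb{S}_{c}=-\na c\otimes\na c+\tfrac{1}{2}|\na c|^{2}\mathbb{I}$, which is new compared with \cite{jiang,pw}. I need to estimate $\||\na c|^{2}\|_{L^{s}}=\|\na c\|_{L^{2s}}^{2}$. Reading $\eqref{n6}_{4}$ as a Neumann Poisson equation $\lap c=\n\frac{\p f}{\p c}-\n\mu$ (the compatibility condition $\int\lap c=0$ is automatic from the boundary condition), elliptic regularity gives $\|c-(c)_{\om}\|_{W^{2,q}}\le C(\|\n\frac{\p f}{\p c}\|_{L^{q}}+\|\n\mu\|_{L^{q}})$. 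Choosing $q=\frac{6s}{3+2s}$ and invoking the Sobolev embedding $W^{1,q}\hookrightarrow L^{2s}$ in three dimensions yields $\|\na c\|_{L^{2s}}\le C\|\n\frac{\p f}{\p c}\|_{L^{q}}+C\|\n\mu\|_{L^{q}}$. By \eqref{b1} and \eqref{b2}, the first term is dominated by $C(\|\n\ln\n\|_{L^{q}}+\|\n\|_{L^{q}})$, and since $q<\g s$ this can be estimated by a subcritical power of $\|\n^{\g}\|_{L^{s}}$ and absorbed into $C$; the second term squares to produce exactly the $\|\n\mu\|_{L^{6s/(3+2s)}}^{2}$ contribution in \eqref{q1a}. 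Pairing $\||\na c|^{2}\|_{L^{s}}$ with $\|\na\phi\|_{L^{s'}}$ then delivers the capillary term in the required form.

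Combining the four estimates yields
\[
\|\de\n^{4}+P\|_{L^{s}}^{s}\le C\bigl(1+\|\na u\|_{L^{2}}+\|\n|u|^{2}\|_{L^{s}}+\|\n\mu\|_{L^{\frac{6s}{3+2s}}}^{2}\bigr)\cdot\|\de\n^{4}+P\|_{L^{s}}^{s-1}+\text{absorbable terms},
\]
from which \eqref{q1a} follows by dividing through (or by Young's inequality). The delicate point throughout is verifying that every lower-order remainder involving $\|\n\|_{L^{p}}$ for some $p\le\g s$ can be absorbed into $\epsilon\|\de\n^{4}+P\|_{L^{s}}^{s}$ without appealing to the a priori bound \eqref{j1}; this uses only $\g>1$ and $s\le 3/2$, so the constant $C$ depends only on the structural quantities listed in the lemma.
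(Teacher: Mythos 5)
Your proposal follows essentially the same route as the paper's proof. The paper also tests $\eqref{n6}_{2}$ against a Bogovskii field whose divergence is (the normalization of) $|\de\n^{4}+\n^{2}\p f/\p\n|^{s-1}$ minus its mean (the paper phrases this as a duality: bound $\int(\de\n^{4}+P)h$ for arbitrary $h\in L^{s/(s-1)}$ and then specialize $h$, which is cosmetically different from writing down the specific $\phi$ but amounts to the same thing). The treatment of the capillary stress is likewise identical: read $\eqref{n6}_{4}$ as a Neumann--Poisson problem for $c$, apply $W^{2,q}$ elliptic regularity with the critical choice $q=\frac{6s}{3+2s}$ so that $W^{1,q}\hookrightarrow L^{2s}$, bound $\|\n\,\p f/\p c\|_{L^{q}}$ by $\|\n\ln\n\|_{L^{q}}$ and interpolate against $L^{1}$ and $L^{\g s}$, and keep the $\|\n\mu\|_{L^{q}}^{2}$ piece as is.

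Two places are stated more loosely than the paper justifies. First, the pressure $\n^{2}\p f/\p\n$ is not sign-definite (it contains a term $\n H_{1}(c)$ that can be negative), so one should take $|\de\n^{4}+\n^{2}\p f/\p\n|^{s-1}$ rather than $(\de\n^{4}+P)^{s-1}$ in the definition of the test field, as the paper does; since the negative part is bounded this is a cosmetic fix, but as written the $(s-1)$ power of a possibly negative quantity is not defined. Second, the claim that ``since $q<\g s$ this can be estimated by a subcritical power'' understates what is actually needed: one must verify that the interpolation exponent $\frac{4s-3}{3(\g s-1)}$ appearing in the paper's \eqref{q0} is strictly less than $1$, and that inequality is exactly equivalent to $\g>\frac{4}{3}$. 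The hypothesis $\g>\frac{4}{3}$ therefore enters precisely in this step, not as an afterthought, and you should make the numerology explicit rather than attribute the absorption to the single inequality $q<\g s$. Likewise, the ``mean term'' $(h)_{\om}\int(\de\n^{4}+P)$, i.e.\ the $\|\de\n^{4}+P\|_{L^{1}}$ contribution, is also absorbed by interpolating $L^{1}\cap L^{\g s}$, which again uses $\g>1$ together with the normalization $\int\n=m_{1}$; it deserves a sentence rather than the label ``lower order.'' With these points supplied, your argument matches the paper's.
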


\begin{proof}
For any $h\in L^{\frac{s}{s-1}}$,  we test the equation  $\eqref{n6}_{2}$ against $\mathcal{B}(h-(h)_{\om})$ and  deduce that
\be\la{q5.3}\ba  & \int\left(\de \n^{4}+\n^{2} \frac{\p f}{\p \n}\right) h\\
&= \left(h\right)_{\om}\int \left(\de \n^{4}
+\n^{2} \frac{\p f}{\p \n}\right) -\int \n g \cdot \mathcal{B}\left(h-(h)_{\om}\right)+ \int \mathbb{S}_{ns}:\na \mathcal{B}(h-(h)_{\om})\\
&\quad -\int \n u\otimes u:\na \mathcal{B}(h-(h)_{\om})+\int \mathbb{S}_{c} :\na \mathcal{B}(h-(h)_{\om})\\
&\le C\|h\|_{L^{\frac{s}{s-1}}}\left(1+\|\de \n^{4}+\n^{2} \frac{\p f}{\p \n}\|_{L^{1}}+\|\na u\|_{L^{2}}+ \|\n |u|^{2}\|_{L^{s}}+ \|\na c\|_{L^{2s}}^{2}\right),
\ea\ee
where  we have used \eqref{e7b}, Lemma \ref{lem2.2},  the fact $\frac{s}{s-1}>2,$ and the following inequality:
\bnn \ba&\|h\|_{L^{2}}+\|\mathcal{B}(h-(h)_{\om})\|_{L^{\infty}}+\|\na\mathcal{B}(h-(h)_{\om})\|_{L^{2}}
+\|\na\mathcal{B}(h-(h)_{\om})\|_{L^{\frac{s}{s-1}}}\\&\le C\|h\|_{L^{\frac{s}{s-1}}}.\ea\enn
Now we choose 
\bnn h=\left(\frac{|\de\n^{4}+\n^{2} \frac{\p f}{\p \n}|}{\|\de \n^{4}+\n^{2} \frac{\p f}{\p \n}\|_{L^{s}}}\right)^{s-1}\in L^{\frac{s}{s-1}}\enn 
and derive from \eqref{q5.3}  that
 \be\la{q1}
 \begin{split}
&\left\|\de \n^{4}+\n^{2} \frac{\p f}{\p \n} \right\|_{L^{s}}\\
&\le C \left(1+\left\|\de \n^{4}+\n^{2} \frac{\p f}{\p \n}\right\|_{L^{1}}+\|\na u\|_{L^{2}}+ \|\n |u|^{2}\|_{L^{s}}+ \|\na c\|_{L^{2s}}^{2}\right).
\end{split}
\ee

Next, by  \eqref{b1}, \eqref{b2},  and   the interpolation theorem, it holds that
\be\la{q0}\ba 
\left\|\n \frac{\p f}{\p c}\right\|_{L^{\frac{6s}{3+2s}}}^{2}&\le C+C\|\n \ln \n\|_{L^{\frac{6s}{3+2s}}}^{2}\\
&\le C+C\|\n^{\g}\|_{L^{s}}^{\frac{(4s-3)}{3(\g s-1)}+\eta}\\
&\le C+C\left\|\de \n^{4}+\n^{2} \frac{\p f}{\p \n}\right\|_{L^{s}}^{\frac{(4s-3)}{3(\g s-1)}+\eta}. \ea\ee
Since $\g>\frac{4}{3}$, if    $\eta>0$ is small, one has 
$$\frac{(4s-3)}{3(\g s-1)}+\eta<1.$$ 
Utilizing \eqref{1a} and  \eqref{q0}, we obtain
\be\la{q2}\ba \|\na c  \|_{L^{2s}}^{2}&\le C\|\na^{2} c  \|_{L^{\frac{6s}{3+2s}}}^{2}  \le C\|\lap c  \|_{L^{\frac{6s}{3+2s}}}^{2} \\
 &\le C\left\|\n \frac{\p f}{\p c}\right\|_{L^{\frac{6s}{3+2s}}}^{2}+C\|\n \mu\|_{L^{\frac{6s}{3+2s}}}^{2}\\
&\le C+\frac{1}{2}\left\|\de \n^{4}+\n^{2} \frac{\p f}{\p \n}\right\|_{L^{s}}
+C\|\n \mu\|_{L^{\frac{6s}{3+2s}}}^{2}.\ea\ee
Substituting  \eqref{q2} into \eqref{q1}, we conclude \eqref{q1a}. The proof of Lemma \ref{lem3.1} is completed.
\end{proof}

\textcolor{black}{\begin{remark}Due to the    boundary condition $\frac{\p\mu}{\p n}=0$ and  the coupling  of the chemical potential  $\mu$ with the density $\n$,    the restriction   $\gamma >\frac{4}{3}$ seems   critical in  our proof especially when closing   {\it a priori} estimates on the pressure function. See  also Lemmas \ref{lem3.4}-\ref{lem3.5} below.\end{remark}}

Next,  we shall deduce some  weighted estimates on  the pressure and kinetic energy together, i.e., the weighted total energy  motivated by   \cite{jiang,pw}. 
 As in  \cite{freh}, we introduce
\be\la{r10} \xi(x)=\frac{\phi(x)\na\phi(x)}{ \left(\phi(x)+|x-x^{*}|^{\frac{2}{2-\a}}\right)^{ \a}}
\quad {\rm with}\quad x,\,x^{*}\in \overline{\om},\,\, \a\in (0,1),\ee
where the function $\phi(x)\in C^{2}(\overline{\om})$  
can be regarded as   the distance function when $x\in \om$ is close to the boundary,  smoothly extended    to the whole  domain $\om.$  In particular,    \be\la{x7}\left\{\ba& \phi(x)>0\,\,\,{\rm in}\,\,\om\,\,\,{\rm and}\,\,\,\phi(x)=0\,\,\,{\rm on}\,\,\,\p\om,\\
&|\phi(x)|\ge k_{1} \,\,\,{\rm if}\,\,\,x\in \om\,\,\,{\rm and}\,\,\,{\rm dist}(x,\,\p\om)\ge k_{2},\\
&  \na \phi=\frac{x-\tilde{x}}{\phi(x)} =\frac{x-\tilde{x}}{|x-\tilde{x}|} \,\,\,{\rm if}\,\,\,x\in \om\,\,\,{\rm and}\,\,\,{\rm dist}(x,\,\p\om)=|x-\tilde{x}|\le k_{2}, \ea\right.\ee
where the  constants $k_{i}>0$, $i=1, 2$,  are given.
See for example \cite[Exercise 1.15]{zie} for details. 

 \begin{lemma} \la{c1}  Let  $(\n,u,\mu,c)$ be the solutions stated in Proposition \ref{pro}. Then, for $\a \in (0,1)$, the following properties hold:

 {\rm (i)} In case of  $x^{*}\in \p\om,$  we have
 \be\la{r5-1}\ba  
 &\int_{B_{k_{2}}(x^{*})\cap \om} \frac{\left(\de \n^{4}+\n^{\g} +\n |u|^{2}\right)(x)}{|x-x^{*}|^{\a}}dx\\
&\quad  \le   C\left( 1+\|\na u \|_{L^{2}}+  \|\n |u|^{2}  \|_{L^{\frac{3}{2}}}+  \|\n \mu  \|_{L^{\frac{3}{2}}}^{2} \right),\ea\ee
where $k_{2}$ is taken from \eqref{x7}, and   $C$ is independent of     $x^{*}$.

{\rm (ii)} In case of $x^{*}\in  \om,$ we have
 \be\la{r5-2}\ba  
 &\int_{B_{r}(x^{*})} \frac{\left(\de \n^{4}+\n^{\g}+\n |u|^{2}\right)(x)}{|x-x^{*}|^{\a}}dx \\
 &\quad \le  C\left( 1+\|\na u \|_{L^{2}}+  \|\n |u|^{2}  \|_{L^{\frac{3}{2}}}+  \|\n \mu  \|_{L^{\frac{3}{2}}}^{2} \right),\ea\ee where $r=\frac{1}{3}{\rm dist}(x^{*},\,\p\om)>0,$ and   $C$   is  independent of $r$ or $x^{*}$.
\end{lemma}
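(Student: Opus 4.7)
The plan is to test the momentum equation $\eqref{n6}_{2}$ against the vector field $\xi$ defined in \eqref{r10}. Since $\phi$ vanishes on $\p\om$, so does $\xi$, so integration by parts produces no boundary contribution and yields the identity
\bnn \int(\de\n^{4}+P)\,\div\xi+\int\n u\otimes u:\na\xi=\int\mathbb{S}_{ns}:\na\xi+\int\mathbb{S}_{c}:\na\xi-\int\n g\cdot\xi.\enn
The weighted total energy will be extracted from the two terms on the left, and the four terms on the right must be controlled by the quantities appearing on the right-hand sides of \eqref{r5-1}--\eqref{r5-2}.

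First I would record the pointwise geometric bounds on $\xi$. Writing $\psi(x)=\phi(x)+|x-x^{*}|^{2/(2-\a)}$, a direct differentiation of \eqref{r10} together with the structural properties \eqref{x7} gives, as in \cite{freh,pw,jiang}, that $\div\xi\ge c_{\a}\psi^{-\a}$ on $\om$, together with the matrix-level positivity $v^{\top}(\na\xi)v\ge c_{\a}|v|^{2}\psi^{-\a}$ for every $v\in\r$ on the localised region ($B_{k_{2}}(x^{*})\cap\om$ in case (i), and $B_{r}(x^{*})$ in case (ii)). The matrix bound relies on the alignment of $\na\phi$ with $(x-\ti x)/|x-\ti x|$ near $\p\om$, which makes the rank-one part $\na\phi\otimes\na\phi$ dominate the cross terms coming from $\na\psi$. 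Combined with the comparison $\psi\le C|x-x^{*}|^{2/(2-\a)}$ on the relevant balls and the pressure lower bound $P\ge\n^{\g}-C\n|\ln\n|$ inherited from \eqref{b1}--\eqref{b2}, these pointwise facts turn the left-hand side into a lower bound of the form
\bnn c\int_{B_{\star}(x^{*})\cap\om}\frac{\de\n^{4}+\n^{\g}+\n|u|^{2}}{|x-x^{*}|^{\a}}\,dx\ -\ C,\enn
with $B_{\star}$ denoting the appropriate ball; the lower-order $\n|\ln\n|$ contribution is absorbed by interpolation against $\|\n\|_{L^{1}}=m_{1}$.

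For the right-hand side, the Navier--Stokes stress is estimated by $C\|\na u\|_{L^{2}}\|\na\xi\|_{L^{2}}\le C\|\na u\|_{L^{2}}$, using $|\na\xi|\le C|x-x^{*}|^{-\a}\in L^{2}$ for $\a<3/2$; the gravity term by $\|g\|_{L^{\infty}}\|\xi\|_{L^{\infty}}\|\n\|_{L^{1}}\le C$, since $|\xi|\le C|x-x^{*}|^{1-\a}$ is bounded; and the part of $\int\n u\otimes u:\na\xi$ from the complement of the region where the matrix lower bound is active, via H\"older, by $\|\n|u|^{2}\|_{L^{3/2}}\|\na\xi\|_{L^{3}}\le C\|\n|u|^{2}\|_{L^{3/2}}$. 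The genuinely new contribution is the capillary stress: H\"older's inequality with exponent $3$, valid precisely because $\a<1$ makes $|x-x^{*}|^{-3\a}$ integrable, gives
\bnn \Big|\int\mathbb{S}_{c}:\na\xi\Big|\le C\int\frac{|\na c|^{2}}{|x-x^{*}|^{\a}}\,dx\le C\|\na c\|_{L^{3}}^{2}.\enn
To convert this into $\|\n\mu\|_{L^{3/2}}^{2}$ I would apply elliptic regularity to $\eqref{n6}_{4}$ with Neumann data to get $\|\na^{2}c\|_{L^{3/2}}\le C(\|\n\mu\|_{L^{3/2}}+\|\n\p_{c}f\|_{L^{3/2}})$; the Sobolev embedding $W^{2,3/2}\hookrightarrow W^{1,3}$ then yields $\|\na c\|_{L^{3}}^{2}\le C\|\n\mu\|_{L^{3/2}}^{2}+C\|\n\ln\n\|_{L^{3/2}}^{2}+C$ by virtue of \eqref{b2}, and the $\|\n\ln\n\|_{L^{3/2}}^{2}$ term is absorbed as before.

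Collecting these estimates and moving absorbable terms to the left produces \eqref{r5-1} in case (i) and \eqref{r5-2} in case (ii). The hardest step will be justifying the matrix positivity of $\na\xi$ uniformly in $x^{*}\in\overline{\om}$: for interior $x^{*}$ (case (ii)) this reduces to a clean radial computation reminiscent of $\xi=(x-x^{*})/|x-x^{*}|^{\a}$, but for $x^{*}\in\p\om$ (case (i)) it requires a careful use of the distance-function structure of $\phi$ so that the degenerate factor $\phi\na\phi$ still yields coercivity in all tangential directions, not merely in the normal one.
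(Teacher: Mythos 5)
Your proposal for case (i) is essentially the paper's proof: multiply the momentum equation by the Frehse--Steinhauer--Weigant multiplier $\xi$ from \eqref{r10}, extract a weighted lower bound from $\div\xi$ and from $\int\n u\otimes u:\na\xi$, control the Navier--Stokes stress by $\|\na u\|_{L^{2}}$, the gravity term by $L^{\infty}$, and the capillary stress by $\|\na c\|_{L^{3}}^{2}$, then convert $\|\na c\|_{L^{3}}^{2}$ into $\|\n\mu\|_{L^{3/2}}^{2}$ via $W^{2,3/2}$ elliptic regularity for $\eqref{n6}_{4}$ and Sobolev embedding, which is precisely the paper's \eqref{q2}. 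Two caveats on case (i). First, the paper does \emph{not} rely on a pointwise matrix inequality $v^{\top}(\na\xi)v\ge c_{\a}|v|^{2}\psi^{-\a}$, which in fact is delicate away from $x^{*}$; instead, in \eqref{x9} it expands $\int\n u\otimes u:\na\xi$ exactly, keeps the good diagonal term, and controls the two cross terms by Cauchy--Schwarz inside the integral, using the degeneracy factors $\phi/\psi\le1$ and $\phi\le|x-x^{*}|$. Your over-strong pointwise claim is risky; the integral-level bound is what is actually available. Second, your statement that the lower-order pressure contribution ``$\n|\ln\n|$ ... is absorbed by interpolation against $\|\n\|_{L^{1}}$'' is not what happens: there is no logarithm in $P$ (one has $P=(\g-1)\n^{\g}+H_{1}(c)\n$), and the $\overline{H}\n$ defect produced by writing $P=\n^{2}\p_{\n}\widetilde{f}-\overline{H}\n$ must be absorbed \emph{inside} the weighted integral $\int\n|x-x^{*}|^{-\a}$ via Young's inequality against $\n^{\g}|x-x^{*}|^{-\a}$ (the paper's \eqref{ccc}); controlling it merely by $\|\n\|_{L^{1}}$ is insufficient because of the singular weight.

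The genuine gap is case (ii). You describe it as ``a clean radial computation'', but the radial multiplier $(x-x^{*})|x-x^{*}|^{-\a}$ does not vanish on $\p\om$, so one is forced to introduce a cutoff $\chi$ supported in $B_{2r}(x^{*})$ with $r=\frac13\,{\rm dist}(x^{*},\p\om)$. This produces an annular term $\int_{B_{2r}\backslash B_{r}}(\de\n^{4}+\n^{2}\p_{\n}\widetilde f+\n|u|^{2})|x-x^{*}|^{-\a}\,dx$ which is the crux of case (ii). When $x^{*}$ is far from $\p\om$, $|x-x^{*}|\ge r\gtrsim1$ on the annulus and the term reduces to unweighted $L^{1}$ norms. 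But when $x^{*}$ is close to $\p\om$, $r$ is small and the weight $|x-x^{*}|^{-\a}\sim r^{-\a}$ is large on the annulus; the paper controls this by comparing $x^{*}$ to its nearest boundary point $\tilde x^{*}$, using the elementary geometric inequality $4|x-x^{*}|\ge|x-\tilde x^{*}|$ for $x\notin B_{r}(x^{*})$ (the paper's \eqref{bb1}), and then invoking the already-established boundary estimate \eqref{r5-1} at $\tilde x^{*}$. Without this nearest-point reduction the constant in \eqref{r5-2} would degenerate as ${\rm dist}(x^{*},\p\om)\to0$, and the claimed uniformity in $x^{*}$ (and in $r$) would fail. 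Your proposal contains no mechanism for this and therefore does not establish case (ii) as stated.
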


\begin{proof}
In order to prove Lemma \ref{c1} we borrow some ideas developed in     \cite{freh,mpm,pw}  and modify the proof in \cite{liang}.

 
Write the function $f(\n,c)$ in  \eqref{b1} as
\bnn \ba f(\n,c)=  \n^{\g}+\left(H_{1}(c)+\overline{H}\right)\ln \n  +H_{2}(c) -\overline{H} \ln \n=\widetilde{f}(\n,c) -\overline{H} \ln \n,\ea\enn
where
$$\widetilde{f}(\n,c)=\n^{\g}+\left(H_{1}(c)+\overline{H}\right)\ln \n  +H_{2}(c).$$
Then, we have
\be\la{jj9} \ba   \n^{2}\frac{\p f(\n,c)}{\p \n}&=\n^{2}\frac{\p \widetilde{f}(\n,c)}{\p \n} -\n \overline{H},\ea\ee
and  \be\la{jj8} \n^{2}\frac{\p \widetilde{f}(\n,c)}{\p \n}=
 (\g-1)\n^{\g}+\n \left(H_{1}(c)+\overline{H}\right)\ge (\g-1)\n^{\g}\ge0,\ee 
 due to \eqref{b2} and \eqref{e7c}.

\smallskip

\noindent {\bf Stpe 1: Proof of  \eqref{r5-1}:} \; 
From  \eqref{r10} and \eqref{x7}  we see that
$\xi\in L^{\infty} \cap W^{1,p}_{0}$ with $p\in [2,\frac{3}{\a})$.  Furthermore,
 by  \eqref{x7}
 and the fact $\frac{2}{2-\a}>1$, one  has \be\la{z009} \phi(x)<\phi(x)+|x-x^{*}|^{\frac{2}{2-\a}} \le  C|x-x^{*}|.\ee
 With   \eqref{x7} and \eqref{z009},  one deduces that, for ${\rm dist} (x,\,\p\om)\le k_{2},$
\be\la{x8}\ba  C + \frac{C}{|x-x^{*}|^{\a}}\ge
 \div \xi(x) &\ge -C + \frac{(1-\a)}{2}\frac{|\na \phi(x)|^{2}}{\left(\phi(x)+|x-x^{*}|^{\frac{2}{2-\a}}\right)^{\a}} \\
&\ge -C + \frac{C}{|x-x^{*}|^{\a}}.\ea\ee
Thanks to  \eqref{jj9},   we multiply   $\eqref{n6}_{2}$ by    $\xi$  to obtain
 \be\la{r11}\ba&\int \left(\de \n^{4}+\n^{2} \frac{\p \widetilde{f}}{\p \n}\right)\div \xi+ \int \n  u \otimes u  :\na  \xi\\
 &= -\int \n g \cdot \xi+\int \left(\mathbb{S}_{ns}+\mathbb{S}_{c}\right):\na  \xi
 +\overline{H}\int \n \div \xi.\ea\ee
 By  \eqref{b2}, \eqref{e7b}-\eqref{e7c},   \eqref{x8}, and the fact $\xi\in L^{\infty}\cap W_{0}^{1,3}$, we estimate  the  right-hand side of \eqref{r11} as
\be\la{r13}\ba   \left|-\int \n g \cdot \xi+\int \left(\mathbb{S}_{ns}+\mathbb{S}_{c}\right):\na  \xi\right| 
& \le C(\a) \left(1+\|\na u \|_{L^{2}} +\|\na  c \|_{L^{3}}^{2}\right)\\
& \le C(\a)\left(1+\|\na u \|_{L^{2}} +\|\lap c \|_{L^{\frac{3}{2}}}^{2}\right),\ea\ee
and
\be\la{r13w}  \left|\overline{H}\int \n \div \xi\right|
 \le C  \left(1+ \int \frac{\n(x)}{|x-x^{*}|^{\a}}dx\right). \ee
For the left-hand side of \eqref{r11}, it holds from  \eqref{jj8} and  \eqref{x8} that 
\be\la{r12}\ba
 \int \left(\de \n^{4}+\n^{2} \frac{\p \widetilde{f}}{\p \n}\,\right)\div \xi\ge
 &-C \int\left(\de \n^{4}+\n^{2} \frac{\p \widetilde{f}}{\p \n}\, \right)\\
 &+C \int_{\om\cap B_{k_{2}}(x^{*})} \frac{\left(\de \n^{4}+\n^{2} \frac{\p \widetilde{f}}{\p \n}\, \right)}{|x-x^{*}|^{\a}}.\ea\ee
By  \eqref{x7}, one has $$\p_{j}\p_{i}\phi=\frac{\p_{i}(x-\tilde{x})^{j}}{\phi}-\frac{\p_{j}\phi\p_{i}\phi}{\phi}.$$
 Then,
\bnn \ba&\int \frac{\phi \n   u \otimes u \p_{j}\p_{i}\phi}{\left(\phi+|x-x^{*}|^{\frac{2}{2-\a}}\right)^{\a}} =\int \frac{\n|u |^{2}}{\left(\phi+|x-x^{*}|^{\frac{2}{2-\a}}\right)^{ \a}}-\int \frac{\n|u \cdot\na \phi|^{2}}{\left(\phi+|x-x^{*}|^{\frac{2}{2-\a}}\right)^{ \a}}.\ea\enn
Thus  we have the following computation and estimate:
\be\la{x9}\ba &\int \n u\otimes u :\na  \xi\\
&=\int \frac{  \n  |u|^{2}}{\left(\phi+|x-x^{*}|^{\frac{2}{2-\a}}\right)^{ \a}} - \a \int \frac{\phi \n(u\cdot \na\phi)^{2}}{\left(\phi+|x-x^{*}|^{\frac{2}{2-\a}}\right)^{\a+1}}\\
&\quad - \a\int \frac{\phi\n(u\cdot \na |x-x^{*}|^{\frac{2}{2-\a}})(u\cdot\na \phi)}{\left(\phi+|x-x^{*}|^{\frac{2}{2-\a}}\right)^{\a+1}}\\
&\ge (1- \a)\int \frac{  \n |u|^{2}}{\left(\phi+|x-x^{*}|^{\frac{2}{2-\a}}\right)^{\a}}
- \a\int \frac{\phi\n (u\cdot \na |x-x^{*}|^{\frac{2}{2-\a}})(u\cdot\na \phi)}{\left(\phi+|x-x^{*}|^{\frac{2}{2-\a}}\right)^{\a+1}}\\
&\ge \frac{(1- \a)}{2}\int \frac{\n |u|^{2}}{\left(\phi+|x-x^{*}|^{\frac{2}{2-\a}}\right)^{ \a}} -C \int \frac{\phi^{2}\n|u|^{2}  |x-x^{*}|^{\frac{2\a}{2-\a}}}{\left(\phi+|x-x^{*}|^{\frac{2}{2-\a}}\right)^{\a+2}}\\ &\ge C \int_{\om\cap B_{k_{2}}(x^{*})} \frac{\n|u|^{2}}{ |x-x^{*}|^{\a}} -C\|\n|u|^{2}\|_{L^{1}},\ea\ee
where   we have used   \eqref{z009}  and the Cauchy inequality.
Therefore, taking  \eqref{r13}-\eqref{x9} into account, using \eqref{q2}, \eqref{q1a},  and
  $\frac{6s}{3+2s}< \frac{3}{2}$, we deduce from \eqref{r11} that
\be\la{x10}
\ba
&\int_{\om\cap B_{k_{2}}(x^{*})} \frac{ \de \n^{4}+\n^{2} \frac{\p \widetilde{f}}{\p \n} +\n |u |^{2}}{|x-x^{*}|^{\a}}\\
& \le  C\left( \|\de \n^{4}+\n^{2} \frac{\p f}{\p \n}\|_{L^{1}}+\|\na u \|_{L^{2}}+  \|\n |u|^{2}  \|_{L^{1}}+ \|\lap c \|_{L^{\frac{3}{2}}}^{2} \right)\\
&\qquad +C \int  \frac{\n(x)}{|x-x^{*}|^{\a}}dx\\
& \le C\left( 1+\|\na u \|_{L^{2}}+  \|\n |u|^{2} \|_{L^{\frac{3}{2}}}+
  \|\n \mu  \|_{L^{\frac{3}{2}}}^{2} \right)+C\int\frac{\n(x)}{|x-x^{*}|^{\a}}dx.
  \ea\ee
Finally,  thanks to \eqref{e7c} and \eqref{jj8}, one  has
\be\la{ccc}\ba  C\int\frac{\n(x)}{|x-x^{*}|^{\a}}dx&=C\left(\int_{\om \backslash B_{k_{2}}(x^{*})}+\int_{\om\cap B_{k_{2}}(x^{*})}\right)\frac{\n(x)}{|x-x^{*}|^{\a}}dx\\
&\le C+C\int_{\om\cap B_{k_{2}}(x^{*})} \frac{\n(x)}{|x-x^{*}|^{\a}}dx\\
&\le C+\frac{1}{2}\int_{\om\cap B_{k_{2}}(x^{*})} \frac{\n^{2}
 \frac{\p \widetilde{f}}{\p \n}}{|x-x^{*}|^{\a}}.\ea\ee
Substituting  \eqref{ccc} back into \eqref{x10}, we obtain  \eqref{r5-1}.

\smallskip

\noindent {\bf Stpe 2: Proof of  \eqref{r5-2}:} \; 
Let  ${\rm dist}(x^{*},\,\p\om)=3r>0$, and   $\chi$ be  the smooth cut-off function  satisfying
 \be\la{cut} \chi(x)= 1\,\,{\rm if}\,\,x\in B_{r}(x^{*}),\quad\chi(x)=0\,\,{\rm if}\,\,
 x\notin B_{2r}(x^{*}),
 \quad |\na \chi(x)|\le 2r^{-1}.\ee 
 If we multiply $\eqref{n6}_{2}$ by
 $\frac{x-x^{*}}{|x-x^{*}|^{\a}}\chi^{2}$, we get
  \be\la{r8}\ba&\int \left(\de \n^{4} +\n^{2} \frac{\p \widetilde{f}}{\p\n}\,\right)  \frac{3-\a}{|x-x^{*}|^{\a}}\chi^{2} + \int \n u\otimes u :\na  \left(\frac{x-x^{*}}{|x-x^{*}|^{\a}}\chi^{2}\right)\\
&= -\int  \n g \cdot \frac{x-x^{*}}{|x-x^{*}|^{\a}}\chi^{2}+\int \left(\mathbb{S}_{ns}+\mathbb{S}_{c}\right):\na  \left(\frac{x-x^{*}}{|x-x^{*}|^{\a}}\chi^{2}\right)\\
&\quad
 -2\int \left(\de \n^{4} +\n^{2} \frac{\p f}{\p \n}\right)  \chi \frac{\na\chi\cdot(x-x^{*})}{|x-x^{*}|^{\a}}+\overline{H}\int \n \frac{3-\a}{|x-x^{*}|^{\a}}\chi^{2}.\ea\ee
From the following computation,
\bnn\ba &\p_{i} \left(\frac{x^{j}-(x^{*})^{j}}{|x-x^{*}|^{\a}}\chi^{2}\right)\\
&= \frac{\p_{i}(x^{j}-(x^{*})^{j})}{|x-x^{*}|^{\a}}\chi^{2} -\a \frac{(x^{j}-(x^{*})^{j})(x^{i}-(x^{*})^{i})}{|x-x^{*}|^{\a+2}}\chi^{2}+2\chi \frac{x^{j}-(x^{*})^{j}}{|x-x^{*}|^{\a}}\p_{i} \chi,
\ea\enn
 one sees that the second term on the left-hand side of \eqref{r8} satisfies
 \be \ba \label{325}
 &\int \n u\otimes u:\na \left(\frac{x-x^{*}}{|x-x^{*}|^{\a}}\chi^{2}\right)\\
 &\ge(1-\a)\int \frac{\n |u|^{2}}{|x-x^{*}|^{\a}}\chi^{2}+2\int  \frac{ \chi\n (u\cdot\na \chi )(u\cdot(x-x^{*}))}{|x-x^{*}|^{\a}}\\
 &\ge\frac{1-\a}{2}\int \frac{\n |u|^{2}}{|x-x^{*}|^{\a}}\chi^{2}-C \int_{B_{2r}(x^{*})\backslash B_{r}(x^{*})} \frac{\n|u|^{2}}{|x-x^{*}|^{\a}},\ea\ee
 where the constant $C$ is independent of $r,$ and for the last inequality   we have used $|\na \chi||x-x^{*}|\le 4$ for any $x\in B_{2r}(x^{*})\backslash B_{r}(x^{*}).$
Owing to \eqref{cut}, \eqref{q2}, and  the fact $$\na  \left(\frac{x-x^{*}}{|x-x^{*}|^{\a}}\chi^{2}\right) \in L^{3},$$  we have the following estimates:
 \be \ba \label{326}
 &\left| -\int   \n g  \cdot \frac{x-x^{*}}{|x-x^{*}|^{\a}}\chi^{2}+\int \left( \mathbb{S}_{ns}+\mathbb{S}_{c}\right) :\na  \left(\frac{x-x^{*}}{|x-x^{*}|^{\a}}\chi^{2}\right) \right|\\
 &\le C  \left(1+\|\na u\|_{L^{2}} + \|\lap c\|_{L^{\frac{3}{2}}}^{2}\right), \ea\ee
 and
\be\ba\label{327}
&\left|-2\int \left(\de \n^{4} +\n^{2} \frac{\p f}{\p \n}\right)  \chi \frac{\na\chi\cdot(x-x^{*})}{|x-x^{*}|^{\a}}+\overline{H}\int \n \frac{3-\a}{|x-x^{*}|^{\a}}\chi^{2}\right|\\
&\le   C\int_{B_{2r}(x^{*})\backslash B_{r}(x^{*})}
 \frac{\left(\de \n^{4} +\n^{2}\frac{\p \widetilde{f}}{\p\n}\right)}{|x-x^{*}|^{\a}}+C\int_{B_{2r}(x^{*})} \frac{\n(x)}{|x-x^{*}|^{\a}}dx,\ea \ee
where $C$ is independent of $r.$

With the above three estimates \eqref{325}-\eqref{327} in hand, we deduce from  \eqref{r8} that
  \be\ba \label{328}
 & \int_{B_{r}(x^{*})} \frac{\left(\de \n^{4} +\n^{2} \frac{\p \widetilde{f}}{\p \n}+\n|u|^{2}\right)(x)}{ |x-x^{*}|^{\a}}dx \\
& \le C\left(1  +  \|\na u\|_{L^{2}} + \|\lap c\|_{L^{\frac{3}{2}}}^{2} \right)\\
 &\quad +  C \int_{B_{2r}(x^{*})\backslash B_{r}(x^{*})} \frac{\left(\de \n^{4} +\n^{2} \frac{\p \widetilde{f}}{\p \n}+\n|u|^{2}\right)(x)} {|x-x^{*}|^{\a}}dx\\
&\quad+C\int_{B_{2r}(x^{*})} \frac{\n(x)}{|x-x^{*}|^{\a}}dx.\ea\ee
By  \eqref{x10}-\eqref{ccc} and the following estimate
\bnn \ba C\int_{B_{2r}(x^{*})} \frac{\n(x)}{|x-x^{*}|^{\a}}dx&=C\left(\int_{B_{r}(x^{*})}+\int_{B_{2r}(x^{*})\backslash B_{r}(x^{*})}\right) \frac{\n(x)}{|x-x^{*}|^{\a}}dx\\
&\le C+\frac{1}{2}\int\frac{\n^{2} \frac{\p \widetilde{f}}{\p \n} (x)}{|x-x^{*}|^{\a}} +
  C \int_{B_{2r}(x^{*})\backslash B_{r}(x^{*})} \frac{\n^{2} \frac{\p \widetilde{f}}{\p \n} (x)} {|x-x^{*}|^{\a}}dx, \ea\enn
 we obtain from \eqref{328} that
    \be\la{r9*}\ba &\int_{B_{r}(x^{*})} \frac{\left(\de \n^{4} +\n^{2} \frac{\p \widetilde{f}}{\p \n}+\n|u|^{2}\right)(x)}{ |x-x^{*}|^{\a}}dx \\
& \le C\left(1  +  \|\na u\|_{L^{2}} + \|\n|u|^{2}\|_{L^{\frac{3}{2}}}
+\|\n  \mu\|_{L^{\frac{3}{2}}}^{2}\right) \\
&\quad+
 C \int_{B_{2r}(x^{*})\backslash B_{r}(x^{*})} \frac{\left(\de \n^{4} +\n^{2} \frac{\p \widetilde{f}}{\p \n}+\n|u|^{2}\right)(x)} {|x-x^{*}|^{\a}}dx.\ea\ee

It remains   to deal with   the last term in \eqref{r9*}. To this end, we use the ideas developed in \cite{liang} and   divide the proof into two cases: $(1)$  $x^{*}\in \om$ is far away from the boundary;  $(2)$  $x^{*}\in \om$ is close to the boundary.

$(1)$  For the case of  ${\rm dist}(x^{*},\,\p\om)= 3r\ge \frac{k_{2}}{2}>0$  with  $k_{2}$ being taken from  \eqref{x7}, 
it is clear that
 \be\la{r9a1}\ba   \int_{B_{2r}(x^{*})\backslash B_{r}(x^{*})} \frac{\left(\de \n^{4}
  +\n^{2} \frac{\p\widetilde{f}}{\p \n}+\n|u|^{2}\right)(x)} {|x-x^{*}|^{\a}}dx \le C(k_{2})   \left\|\de \n^{4} +\n^{2} \frac{\p\widetilde{f}}{\p \n}+\n|u|^{2}\right\|_{L^{1}}.\ea\ee
With \eqref{r9a1}, as well as \eqref{jj9}-\eqref{jj8}, \eqref{ccc},   Lemma \ref{lem3.1}, we deduce from \eqref{r9*} that
 \be\la{r9a}\ba &\int_{B_{r}(x^{*})} \frac{\left(\de \n^{4} +\n^{\gamma}  +\n|u|^{2}\right)(x)}{ |x-x^{*}|^{\a}}dx\\
 &\le C\int_{B_{r}(x^{*})} \frac{\left(\de \n^{4} +\n^{2}  \frac{\p\widetilde{f}}{\p \n}+\n|u|^{2}\right)(x)}{ |x-x^{*}|^{\a}}dx\\
 & \le C\left(1+  \|\na u \|_{L^{2}} + \|\n  |u|^{2}\|_{L^{\frac{3}{2}}} +\|\n  \mu\|_{L^{\frac{3}{2}}}^{2}+   \|\de \n^{4} +\n^{2}\frac{\p\widetilde{f}}{\p \n}+\n|u|^{2}\|_{L^{1}}\right)\\
 & \le C\left( 1+\|\na u \|_{L^{2}}+  \|\n |u|^{2}  \|_{L^{\frac{3}{2}}}+  \|\n \mu  \|_{L^{\frac{3}{2}}}^{2} \right).\ea\ee

$(2)$  For the case of $x^{*}\in \om$ close to the boundary,  that is, 
${\rm dist}(x^{*},\,\p\om)= 3r<\frac{k_{2}}{2}$, 
let $ |x^{*}-\tilde{x}^{*}|={\rm dist}(x^{*},\,\p\om)$ with $\tilde{x}^{*}\in \p\om.$  Then, one deduces (see Figure 1 below) that
       \be\la{bb1} 4 |x-x^{*}|\ge |x-\tilde{x}^{*}|,\quad \forall \,\,\, x\notin B_{r}(x^{*}).\ee
\begin{figure}[H]
\centering
\includegraphics[width=0.5\textwidth]{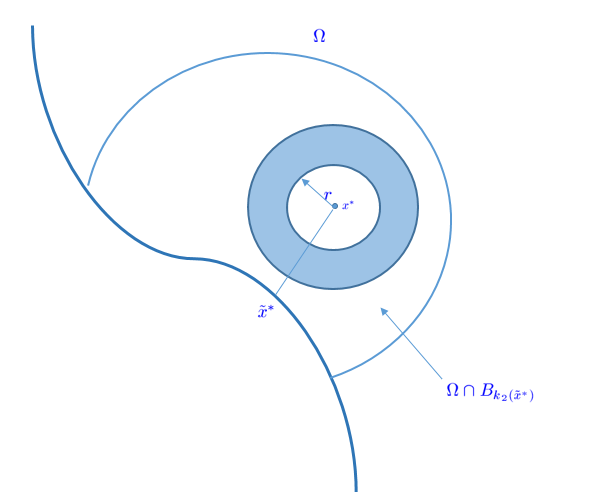}\hspace{-0.5cm}
\caption{Near boundary points}
\end{figure}
Making use of \eqref{bb1}  and  \eqref{x10}-\eqref{ccc},  we have the following estimate, 
\be\ba \label{333} 
& C \int_{B_{2r}(x^{*})\backslash B_{r}(x^{*})} \frac{\left(\de \n^{4}
  +\n^{2} \frac{\p \widetilde{f}}{\p \n}+\n|u|^{2}\right)(x)} {|x-x^{*}|^{\a}}dx\\
  & \le   C \int_{\om\cap B_{k_{2}}(\tilde{x}^{*})}\frac{\left(\de \n^{4}
   +\n^{2} \frac{\p \widetilde{f}}{\p \n}+\n|u|^{2}\right)(x)} {|x-\tilde{x}^{*}|^{\a}}dx\\
& \le C\left( 1+\|\na u\|_{L^{2}}+  \|\n |u|^{2}\|_{L^{\frac{3}{2}}}+  \|\n \mu  \|_{L^{\frac{3}{2}}}^{2} \right).\ea\ee
This inequality  \eqref{333} and  \eqref{jj9}-\eqref{jj8} ensure that  \eqref{r9*} leads to
  \be\la{r9b}\ba  &\int_{B_{r}(x^{*})} \frac{\left(\de \n^{4} +\n^{\gamma}  +\n|u|^{2}\right)(x)}{ |x-x^{*}|^{\a}}dx\\
 &\le C\int_{B_{r}(x^{*})} \frac{\left(\de \n^{4} +\n^{2}  \frac{\p\widetilde{f}}{\p \n}+\n|u|^{2}\right)(x)}{ |x-x^{*}|^{\a}}dx\\
& \le C\left( 1+\|\na u\|_{L^{2}}+  \|\n |u|^{2}  \|_{L^{\frac{3}{2}}}+  \|\n \mu  \|_{L^{\frac{3}{2}}}^{2} \right).\ea\ee
Therefore, the  desired estimate \eqref{r5-2} follows immediately from \eqref{r9a} and \eqref{r9b}.
The proof of Lemma \ref{c1} is completed.
\end{proof}

The next lemma provides  a refined estimate on the weighted energy  
obtained in Lemma \ref{c1}.
 \begin{lemma} \la{c2} Let the  assumptions    in Lemma \ref{c1} hold true. Assume that  there is a   constant $\mathbf{M}$   uniform  in $\de$, such that
\be\la{y1}\mathbf{M}= \max\{1, \|\n\|_{L^{2}}\}<\infty.\ee Then,
   \be\la{r5}\ba  \sup_{x^{*}\in \overline{\om}}\int_{\om} \frac{\left(\de \n^{4} +\n^{\g}  +\n|u|^{2}\right)(x)}{|x-x^{*}|^{\a^{2}}}dx\le  C\left( 1+\|\na u \|_{L^{2}}+  \|\n |u|^{2}  \|_{L^{\frac{3}{2}}}+  \|\na \mu  \|_{L^{2}}^{2}\right).\ea\ee
\end{lemma}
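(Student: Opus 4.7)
The strategy, following the program outlined in the introduction, is to trade the sharper weight $|x-x^*|^{-\a^{2}}$ for the weight $|x-x^*|^{-\a}$ already controlled by Lemma \ref{c1}, at the price of a small prefactor $r_{0}^{\a(1-\a)}$ on a sufficiently small ball. For any $x^{*}\in\overline{\om}$ I would split
$$\int_{\om}\frac{f(x)}{|x-x^*|^{\a^{2}}}\,dx=\int_{\om\cap B_{r_{0}}(x^*)}\frac{f(x)}{|x-x^*|^{\a^{2}}}\,dx+\int_{\om\setminus B_{r_{0}}(x^*)}\frac{f(x)}{|x-x^*|^{\a^{2}}}\,dx,$$
with $f=\de\n^{4}+\n^{\g}+\n|u|^{2}$. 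The exterior piece is bounded by $r_{0}^{-\a^{2}}\|f\|_{L^{1}}$, which is controlled via the energy inequality \eqref{e7a} and Lemma \ref{lem3.1}. For the interior piece I apply the pointwise bound $|x-x^*|^{-\a^{2}}\le r_{0}^{\a(1-\a)}|x-x^*|^{-\a}$ together with Lemma \ref{c1}, yielding
$$\int_{\om\cap B_{r_{0}}(x^*)}\frac{f}{|x-x^*|^{\a^{2}}}\le Cr_{0}^{\a(1-\a)}\Bigl(1+\|\na u\|_{L^{2}}+\|\n|u|^{2}\|_{L^{\frac{3}{2}}}+\|\n\mu\|_{L^{\frac{3}{2}}}^{2}\Bigr).$$

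The crux is to absorb the $\|\n\mu\|_{L^{\frac{3}{2}}}^{2}$ term into the right-hand side of \eqref{r5}, since only $\|\na\mu\|_{L^{2}}^{2}$ is admissible there. By H\"older with exponents $(2,6)$, interpolation of $\|\n\|_{L^{\frac{6}{5}}}$ between $\|\n\|_{L^{1}}=m_{1}$ and $\|\n\|_{L^{2}}\le\mathbf{M}$, and the embedding of Lemma \ref{lem2.1}, I would derive
$$\|\n\mu\|_{L^{\frac{3}{2}}}^{2}\le\|\n\|_{L^{2}}^{2}\|\mu\|_{L^{6}}^{2}\le C\mathbf{M}^{\frac{8}{3}}\bigl(1+\|\na\mu\|_{L^{2}}^{2}\bigr).$$
Selecting $r_{0}=r_{0}(\a,\mathbf{M})>0$ so small that $Cr_{0}^{\a(1-\a)}\mathbf{M}^{\frac{8}{3}}\le 1$ turns the coefficient of $\|\na\mu\|_{L^{2}}^{2}$ into an absolute constant, producing the required bound on the interior integral. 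The Finite Coverage Theorem then guarantees that $\overline{\om}$ can be covered by finitely many balls of this radius $r_{0}$, so that taking the supremum over $x^{*}$ gives a finite constant (depending on $\mathbf{M}$ but independent of $\de$).

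The main technical obstacle I anticipate is that when $x^{*}\in\om$ lies within distance less than $3r_{0}$ of $\p\om$, part (ii) of Lemma \ref{c1} only controls the weighted integral over the ball of radius $\tfrac13\,\mathrm{dist}(x^{*},\p\om)$, which can be strictly smaller than $B_{r_{0}}(x^{*})$. I would resolve this exactly as in the proof of Lemma \ref{c1} itself: pick a nearest boundary point $\tilde x^{*}\in\p\om$, apply case (i) on the larger set $\om\cap B_{k_{2}}(\tilde x^{*})\supset B_{r_{0}}(x^{*})$, and use a comparison of the form $|x-\tilde x^{*}|\le 4|x-x^{*}|$ outside a small exceptional region around $x^{*}$ to convert the weight $|x-\tilde x^{*}|^{-\a}$ into the desired $|x-x^{*}|^{-\a}$. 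Combining the boundary, interior--far, and interior--near cases produces the uniform supremum bound \eqref{r5}.
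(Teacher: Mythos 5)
Your proposal follows essentially the same route as the paper: the pointwise trade $|x-x^*|^{-\a^2}\le r_0^{\a(1-\a)}|x-x^*|^{-\a}$ on a small ball, the interpolation estimate $\|\n\mu\|_{L^{3/2}}^{2}\le C\mathbf{M}^{8/3}(1+\|\na\mu\|_{L^2}^{2})$ via Lemma~\ref{lem2.1}, the choice $r_0\sim\mathbf{M}^{-8/(3\a(1-\a))}$, and the finite covering step are all exactly the paper's argument (equations \eqref{r5ar}--\eqref{r5b1}, \eqref{y6}--\eqref{y7}). You are, if anything, slightly more explicit than the paper about the exterior-ball contribution and about the near-boundary interior points where Lemma~\ref{c1}(ii) only controls the ball of radius $\tfrac13\mathrm{dist}(x^*,\p\om)$; both of these the paper handles rather tersely (the latter by the boundary-comparison \eqref{bb1} already embedded in the proof of Lemma~\ref{c1}), and your proposed resolution is in the same spirit.
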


\begin{proof}
If $x^{*}\in \p\om,$  it holds that,  for  any $r\in (0,k_{2})$, 
\be\la{r5ar}\ba& \frac{1}{r^{\a(1-\a)}}\int_{B_{r}(x^{*})\cap \om} \frac{\left(\de \n^{4} +\n^{\gamma}+\n |u|^{2}\right)(x)}{|x-x^{*}|^{\a^{2}}}dx\\
 &\le  \int_{B_{r}(x^{*})\cap \om} \frac{\left(\de \n^{4} +\n^{\gamma}+\n |u|^{2}\right)(x)}{|x-x^{*}|^{\a}}dx\\
&\le \int_{B_{k_{2}}(x^{*})\cap \om} \frac{\left(\de \n^{4} +\n^{\gamma}
+\n |u|^{2}\right)(x)}{|x-x^{*}|^{\a}}dx.\ea\ee
Combining \eqref{r5ar} with   \eqref{r5-1}, we obtain for any $r\in (0,k_{2})$,
\be\la{r5a}\ba & \int_{B_{r}(x^{*})\cap \om} \frac{\left(\de \n^{4} +\n^{\gamma} +\n |u|^{2}\right)(x)}{|x-x^{*}|^{\a^{2}}}dx\\
&\le   C r^{\a(1-\a)}\left( 1+\|\na u \|_{L^{2}}+  \|\n |u|^{2}  \|_{L^{\frac{3}{2}}}+  \|\n \mu  \|_{L^{\frac{3}{2}}}^{2} \right), \ea\ee
where the constant  $C$ is independent of $r$ or $x^{*}$.
Using \eqref{e7b}, \eqref{3.30}, and the interpolation inequality, we have the following estimate:
\be\la{y6}\ba\|\n \mu  \|_{L^{\frac{3}{2}}}^{2} &\le C\|\n\|_{L^{2}}^{2}\|\mu\|_{L^{6}}^{2}\\
&\le C\|\n\|_{L^{2}}^{2}\left(1+\|\n\|_{L^{\frac{6}{5}}} \|\na\mu\|_{L^{2}}\right)^{2}\\
&\le C\textbf{M}^{\frac{8}{3}}\left(1+\|\na \mu\|_{L^{2}}^{2}\right),\ea\ee
where, and in what follows,  the constant $C$ is independent of $\textbf{M}$.
Choosing   $r_{0}$    small so that
\be\la{y7} r_{0} \le \min\left\{\frac{k_{2}}{2},\, \textbf{M}^{\frac{-8}{3\a(1-\a)}}\right\}.\ee
It follows from  \eqref{r5a} that
\be\la{r5a1}\ba  &\int_{B_{r_{0}}(x^{*})\cap \om} \frac{\left(\de \n^{4} +\n^{\gamma}+\n |u|^{2}\right)(x)}{|x-x^{*}|^{\a^{2}}}dx\\
&\le   C \left( 1+\|\na u \|_{L^{2}}+  \|\n |u|^{2}  \|_{L^{\frac{3}{2}}} \right)
+C r_{0}^{\a(1-\a)}\|\n \mu  \|_{L^{\frac{3}{2}}}^{2}\\
&\le   C \left( 1+\|\na u \|_{L^{2}}+  \|\n |u|^{2}  \|_{L^{\frac{3}{2}}} \right)
+C r_{0}^{\a(1-\a)}\textbf{M}^{\frac{8}{3}}\left(1+\|\na \mu\|_{L^{2}}^{2}\right)\\
&\le   C \left( 1+\|\na u \|_{L^{2}}+  \|\n |u|^{2}  \|_{L^{\frac{3}{2}}}+
 \|\na  \mu \|_{L^{2}}\right),\ea\ee where, for  the last two inequalities, we have used
  \eqref{y6} and \eqref{y7}.

If  $x^{*}\in  \om,$  we use the  similar arguments  to obtain
 \be\la{r5b1}\ba  &\int_{B_{r_{0}}(x^{*})} \frac{\left(\de \n^{4} +\n^{\gamma}+\n |u|^{2}\right)(x)}{|x-x^{*}|^{\a^{2}}}dx\\
&\le C r_{0}^{\a(1-\a)}\int_{B_{r_{0}}(x^{*})} \frac{\left(\de \n^{4} +\n^{\gamma}+\n |u|^{2}\right)(x)}{|x-x^{*}|^{\a}}dx\\
&\le C r_{0}^{\a(1-\a)}\left( 1+\|\na u \|_{L^{2}}+  \|\n |u|^{2}  \|_{L^{\frac{3}{2}}}+ \|\n \mu  \|_{L^{\frac{3}{2}}}^{2} \right)\\
&\le C \left( 1+\|\na u \|_{L^{2}}+  \|\n |u|^{2}  \|_{L^{\frac{3}{2}}}+  \|\na \mu  \|_{L^{2}}^{2} \right).\ea\ee
  As a result of  \eqref{r5a1} and \eqref{r5b1}, we conclude   \eqref{r5} by using the Finite Coverage Theorem,  as  the domain
$\overline{\om}$ is bounded.  The proof of Lemma \ref{c2} is completed.
\end{proof}

The final two    Lemmas  \ref{lem3.4} and   \ref{lem3.5} are devoted to  proving the desired  inequality \eqref{y16} and the {\it a priori} bound \eqref{y1}.

\begin{lemma}\la{lem3.4} Let the assumptions in Proposition \ref{thm1.2} hold true.
 Then \be\ba\la{q1ax}\|u \|_{H_{0}^{1}} + \|\na\mu\|_{L^{2}} \le C .\ea\ee
 \end{lemma}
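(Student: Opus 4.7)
The plan combines the energy inequality \eqref{e7a}, the weighted embedding Lemma \ref{lem2.3}(i), and the weighted total-energy estimate \eqref{r5} of Lemma \ref{c2} (all used under the a priori hypothesis \eqref{y1}). I would first apply \eqref{e7a} and use Cauchy--Schwarz together with the mass constraint $\|\n\|_{L^1}=m_1$ to obtain
$$\lambda_1\|\nabla u\|_{L^2}^2+\|\nabla\mu\|_{L^2}^2 \le \int \n g\cdot u \le \|g\|_{L^\infty} m_1^{1/2}\Bigl(\int \n|u|^2\Bigr)^{1/2}.$$
The task thus reduces to bounding the kinetic-energy integral $\int \n|u|^2$, for which I would invoke Lemma \ref{lem2.3}(i) with $f=\n$ to get $\int \n|u|^2 \le C\mathcal{A}\|\nabla u\|_{L^2}^2$, where $\mathcal{A} := \sup_{x^*\in \overline{\om}}\int\n(x)/|x-x^*|\,dx$.

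The crux is estimating $\mathcal{A}$ via \eqref{r5}, which only provides a weighted $\n^\gamma$-bound with singularity $|x-x^*|^{-\alpha^2}$ and $\alpha<1$. I would convert this to the linear-power Riesz potential by Hölder's inequality with conjugate exponents $\gamma$ and $\gamma/(\gamma-1)$:
$$\int \frac{\n}{|x-x^*|}\,dx \le \Bigl(\int \frac{\n^\gamma}{|x-x^*|^{\alpha^2}}\,dx\Bigr)^{1/\gamma}\Bigl(\int \frac{dx}{|x-x^*|^{(\gamma-\alpha^2)/(\gamma-1)}}\Bigr)^{(\gamma-1)/\gamma}.$$
The second factor is uniformly bounded in $x^*$ iff $(\gamma-\alpha^2)/(\gamma-1)<3$, equivalently $\alpha^2 > 3-2\gamma$; since $\gamma > 4/3$ one has $3-2\gamma<1/3$, so an admissible $\alpha\in(0,1)$ exists. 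Combined with \eqref{r5} this yields $\mathcal{A}\le C\mathbb{E}^{1/\gamma}$, where $\mathbb{E}$ is the right-hand side of \eqref{r5}.

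To close the loop I still need to absorb the residual term $\|\n|u|^2\|_{L^{3/2}}$ appearing in $\mathbb{E}$. I would bound it by Hölder--Sobolev as $\|\n|u|^2\|_{L^{3/2}} \le \|\n\|_{L^3}\|u\|_{L^6}^2 \le C\|\n\|_{L^3}\|\nabla u\|_{L^2}^2$, and control $\|\n\|_{L^3}$ by interpolation between $\|\n\|_{L^1}=m_1$, the a priori bound $\|\n\|_{L^2}\le\textbf{M}$, and the $\n^\gamma$-integrability provided by \eqref{r5}. Substituting all of this back into the chain above produces an inequality of the form
$$\|\nabla u\|_{L^2}^2 + \|\nabla\mu\|_{L^2}^2 \le C\mathbb{E}^{1/(2\gamma)}\|\nabla u\|_{L^2},$$
and, because $1/\gamma<1$, Young's inequality applied iteratively to the right-hand side absorbs every superlinear-in-gradient contribution into the left-hand side, yielding $\|u\|_{H_0^1}+\|\nabla\mu\|_{L^2}\le C$. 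The main obstacle is the Hölder conversion of the weight exponent in the second paragraph, where the hypothesis $\gamma>4/3$ enters (providing the margin $3-2\gamma<1$ needed for an admissible $\alpha$); a secondary difficulty is handling $\|\n|u|^2\|_{L^{3/2}}$ uniformly in $\delta$, since for our range $\gamma\le 2$ no direct $L^3$-bound on $\n$ is available and the weighted estimate \eqref{r5} must be combined with \eqref{y1}.
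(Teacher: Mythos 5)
Your overall architecture is recognizably the same as the paper's: start from the energy inequality \eqref{e7a}, pass the kinetic term through Lemma \ref{lem2.3}(i), feed the resulting Riesz-potential supremum into the weighted total-energy estimate \eqref{r5}, convert the $|x-x^*|^{-\alpha^2}$ weight to $|x-x^*|^{-1}$ via H\"older, and close by a superlinearity-absorbing iteration (your first two and your fifth steps). The H\"older conversion in your second paragraph, with the threshold $\alpha^2>3-2\gamma$ opened up by $\gamma>4/3$, is exactly the philosophy behind \eqref{y11} and \eqref{z2}.

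The gap is in the handling of $\|\n|u|^2\|_{L^{3/2}}$. You propose $\|\n|u|^2\|_{L^{3/2}}\le\|\n\|_{L^3}\|u\|_{L^6}^2$ and then hope to extract $\|\n\|_{L^3}$ from \eqref{y1} plus the weighted $\n^\gamma$ bound of \eqref{r5}. This cannot work in the regime $\gamma\le 2$. The hypothesis \eqref{y1} only gives $\n\in L^2$, and \eqref{r5} only improves the Morrey (local) structure of $\n^\gamma$, not its global Lebesgue summability: $\sup_{x^*}\int\n^\gamma|x-x^*|^{-\alpha^2}\,dx<\infty$ is a Morrey-space statement and does not embed into $L^q(\Omega)$ for any $q>1$. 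With $\gamma\le 2$ you therefore cannot reach $\n\in L^3$ by any interpolation of the data you list, and the problematic term $\|\n|u|^2\|_{L^{3/2}}$ remains uncontrolled; your iteration then fails to close (even in a $\textbf{M}$-dependent way, which would in turn break the closing of the a priori assumption in Lemma \ref{lem3.5}).

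The paper avoids this precisely by \emph{not} applying Lemma \ref{lem2.3}(i) with $f=\n$. It instead fixes $\th=\frac{3\gamma-4}{8\gamma}\in(0,\tfrac18]$, sets $\mathbb{A}=\int \n|u|^{2(2-\th)}$, and applies the weighted embedding with $f=\n|u|^{2(1-\th)}$. One then has the two H\"older interpolations \eqref{y8}--\eqref{y9}, both against $\|\n\|_{L^1}=m_1$ rather than any higher $L^p$ norm of $\n$, and the tripartite decomposition \eqref{y11} splits $\n|u|^{2(1-\th)}$ between the $\n|u|^2$ and $\n^\gamma$ contributions already present on the right-hand side of \eqref{r5}. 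The net exponent $3/(2(2-\th))<1$ closes the bootstrap. In short, the tunable deficit $\th>0$ in the kinetic-energy functional is exactly what substitutes for the missing $L^3$ bound on $\n$; your version with $\th=0$ (which is what $\mathcal A=\sup_{x^*}\int\n/|x-x^*|$ amounts to) loses that degree of freedom. To repair your argument you would need to replace the $\|\n\|_{L^3}$ step by the $\th$-weighted interpolation of the paper.
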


\begin{proof}
Define
\be\la{y12} \mathbb{A}=\int \n |u|^{2}|u|^{2(1-\th)}\quad {\rm with}\quad \th=\frac{3\g-4}{8\g}.\ee
By  \eqref{y10},  one  has 
\be  \la{y13}
 \th \in (0,\,   \frac{1}{8}].
\ee 
Thanks to  \eqref{e7b}
and the  H\"{o}lder inequality,  it holds that
\be\la{y8} \|\n u \|_{L^{1}}\le \|\n  |u|^{2}|u|^{2(1-\th)}\|_{L^{1}}^{\frac{1}{2(2-\th)}}\|\n \|_{L^{1}}^{\frac{3-2\th}{2(2-\th)}}\le C \mathbb{A}^{\frac{1}{2(2-\th)}} \ee
 and
  \be\la{y9} \|\n |u|^{2}\|_{L^{\frac{3}{2}}}\le
   \|\n  |u|^{2}|u|^{2(1-\th)}\|_{L^{1}}^{\frac{1}{2-\th}}\|\n\|_{L^{1}}^{\frac{(1-\th)}{2-\th}}\le C\mathbb{A}^{\frac{1}{2-\th}}.\ee
By means of \eqref{01},  \eqref{b2},   \eqref{e7a},    \eqref{y8},  we get
\be\la{r1s}\ba  \int \left(|\na u|^{2} +   |\na \mu|^{2}\right)
  \le  C\|\n u\|_{L^{1}} \le C \mathbb{A}^{\frac{1}{2(2-\th)}}.\ea\ee

Let
   \be\la{y14}   \a^{2}=1- \frac{\th}{2}.\ee
 One calculates  as the following,
 \be\la{y11}\ba  \frac{\n  |u|^{2(1-\th)}}{|x-x^{*}|}
   &=\left(\frac{\n |u|^{2}}{|x-x^{*}|^{\a^{2}}}\right)^{1-\th} \left(\frac{\n^{\g}}{|x-x^{*}|^{\a^{2}}}\right)^{\frac{\th}{\g}}
 \left(\frac{1}{|x-x^{*}|^{\frac{\g}{2(\g-1)}+\a^{2}}}\right)^{\frac{(\g-1)\th}{\g}},\ea\ee
 where $\frac{\g}{2(\g-1)}+\a^{2}<3$ since $\gamma>\frac{4}{3}.$ 
Hence, utilizing  \eqref{r5},    \eqref{y9}, \eqref{r1s},  we  integrate \eqref{y11} and obtain
\be\la{y20}\ba \int \frac{\n  |u|^{2(1-\th)}(x)}{|x-x^{*}|}dx
 &\le \int\frac{\n |u|^{2}(x)}{|x-x^{*}|^{\a^{2}}}dx+\int\frac{\n^{\g}(x)}{|x-x^{*}|^{\a^{2}}}dx+C\\
&\le C\int_{\om} \frac{\left(\de \n^{4} +\n^{\gamma} +\n|u|^{2}\right)(x)}{|x-x^{*}|^{\a^{2}}}dx\\
&\le  C\left( 1+\|\na u \|_{L^{2}}+  \|\n |u|^{2}  \|_{L^{\frac{3}{2}}}+  \|\na \mu  \|_{L^{2}}^{2}\right)\\
&\le C\left(1+ \mathbb{A}^{\frac{1}{2-\th}}\right).\ea\ee
From  \eqref{r1s},   \eqref{y20},  and Part (i) in Lemma \ref{lem2.3}, one deduces
\bnn\ba  \mathbb{A}&\le \|\na u\|_{L^{2}}^{2}\sup_{x^{*}\in \overline{\om}}
\int \frac{\n  |u|^{2(1-\th)}(x)}{|x-x^{*}|}dx\\&
\le C\mathbb{A}^{\frac{1}{2(2-\th)}}\left(1+ \mathbb{A}^{\frac{1}{2-\th}}\right)\\&
 \le 1+C\mathbb{A}^{\frac{3}{2(2-\th)}},\ea\enn
which together  with \eqref{y13} yields    \be\la{y15} \mathbb{A}\le C.\ee
Combining  \eqref{y15} with \eqref{r1s}, we get \eqref{q1ax}. The proof of Lemma \ref{lem3.4} is completed.
\end{proof}

\begin{lemma}\la{lem3.5} 
Let the assumptions in Theorem \ref{thm1.2} hold true. Then, \be\ba\la{q1as} \|\de \n^{4}+\n^{\g} \|_{L^{s}} +\|\mu\|_{
L^{6}} +\|c\|_{
W_{n}^{2,\frac{3}{2}}}\le C .\ea\ee
\end{lemma}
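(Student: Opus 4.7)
The plan is to convert the estimates of Lemmas \ref{lem2.1}--\ref{lem3.4} into a closed inequality for $X:=\|\rho^\gamma\|_{L^s}$ that absorbs via Young when $\gamma>4/3$. By Lemma \ref{lem3.4} we already have $\|u\|_{H_0^1}+\|\nabla\mu\|_{L^2}\le C$ independent of $\mathbf M$, and by \eqref{y9}--\eqref{y15} also $\|\rho|u|^2\|_{L^{3/2}}\le C$, so $\|\rho|u|^2\|_{L^s}\le C$ for every $s\le 3/2$. Feeding these into Lemma \ref{lem3.1} reduces the task to controlling $\|\rho\mu\|_{L^{6s/(3+2s)}}^2$, i.e.
\[
\|\delta\rho^4+\rho^\gamma\|_{L^s}\le C+C\|\rho\mu\|_{L^{6s/(3+2s)}}^2.
\]

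For the right-hand side I would apply H\"older to split $\|\rho\mu\|_{L^q}\le\|\rho\|_{L^a}\|\mu\|_{L^6}$ with $q=6s/(3+2s)$ and $1/a=1/q-1/6$, then invoke Lemma \ref{lem2.1} together with $\|\nabla\mu\|_{L^2}\le C$ to obtain $\|\mu\|_{L^6}\le C(1+\|\rho\|_{L^{6/5}})$. Both $\|\rho\|_{L^a}$ and $\|\rho\|_{L^{6/5}}$ are then interpolated between $\|\rho\|_{L^1}=m_1$ and $\|\rho\|_{L^{\gamma s}}=X^{1/\gamma}$, a step legitimized by $\gamma s>2$. The outcome is an estimate of the form $\|\rho\mu\|_{L^q}^2\le C+CX^{\sigma}$ for some exponent $\sigma=\sigma(\gamma,s)$, matching the heuristic chain "$\|\rho\mu\|_{L^{3/2}}^2\le C\|\rho\|_{L^2}^{4/3}$" sketched in the Introduction.

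Under $\gamma>4/3$ one picks $s\in(2/\gamma,3/2]$ so that $\sigma<1$; Young's inequality then gives $CX^{\sigma}\le \tfrac{1}{2}X+C$, hence $X\le 2C$ with $C$ independent of $\mathbf M$. A final application of H\"older yields $\|\rho\|_{L^2}\le m_1^{1-1/\gamma}X^{1/\gamma}\le C_0$, uniformly in $\mathbf M$; choosing $\mathbf M:=\max\{1,2C_0\}$ verifies the a priori hypothesis \eqref{y1} and closes the bootstrap started in Lemma \ref{c2}.

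The remaining pieces of \eqref{q1as} follow at once. With $\|\rho\|_{L^{6/5}}$ now controlled, Lemma \ref{lem2.1} gives $\|\mu\|_{L^6}\le C$; and rewriting $\eqref{n6}_4$ as $-\Delta c=\rho\mu-\rho\,\partial f/\partial c$ with the Neumann datum $\partial c/\partial n|_{\partial\Omega}=0$, elliptic regularity for the Neumann Laplacian produces $\|c\|_{W_n^{2,3/2}}\le C(\|\rho\mu\|_{L^{3/2}}+\|\rho\,\partial f/\partial c\|_{L^{3/2}})\le C$ via \eqref{b2} and the now-established bound on $X$. The main obstacle of the argument is the interpolation bookkeeping in the second paragraph: verifying $\sigma<1$ at the critical threshold $\gamma=4/3$ is delicate and expresses the subtle balance between the Sobolev exponent $6/5$ that appears in Lemma \ref{lem2.1}, the interpolation weight between $L^1$ and $L^{\gamma s}$, and the constraint $\gamma s>2$ of \eqref{y10}; this is precisely what the introduction's heuristic chain encodes.
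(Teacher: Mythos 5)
Your proposed route for the key step --- $\|\rho\mu\|_{L^{6s/(3+2s)}}^2\le C+CX^{\sigma}$ via the split $\|\rho\mu\|_{L^q}\le\|\rho\|_{L^a}\|\mu\|_{L^6}$ followed by Lemma~\ref{lem2.1} and interpolation between $L^1$ and $L^{\gamma s}$ --- does not close in the range $\gamma\in(\tfrac43,2]$ that this lemma is designed for. Tracing the exponents with $q=\tfrac{6s}{3+2s}$, $a=\tfrac{6s}{3+s}$, you get $\|\rho\|_{L^a}\le C X^{\theta_1/\gamma}$, $\|\rho\|_{L^{6/5}}\le CX^{\theta_2/\gamma}$ with $\theta_1=\tfrac{\gamma s/a'}{\gamma s-1}$, $\theta_2=\tfrac{\gamma s/6}{\gamma s-1}$, and altogether
$\sigma=\dfrac{2(\theta_1+\theta_2)}{\gamma}$. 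Taking the most favorable case $s=\tfrac32$ (so $a=2$) gives $\sigma=\dfrac{4}{3\gamma-2}$, and $\sigma<1$ forces $\gamma>2$; more generally, $\sigma<1$ is equivalent to $\gamma>\tfrac43+\tfrac1s$, which with $s\le\tfrac32$ again gives $\gamma>2$. So the "interpolation bookkeeping" you flag as delicate in fact fails throughout the interesting regime, and your chain does not reproduce the heuristic $\|\rho\mu\|_{L^{3/2}}^2\le C\|\rho\|_{L^2}^{4/3}$ --- the power you produce is strictly worse than $4/3$ whenever $\gamma\le 2$.

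The missing ingredient is precisely the weighted estimate that the rest of the section was built to deliver. The paper first shows, from the weighted total-energy bound \eqref{r5} (which is where the exponent improvement $\alpha\to\alpha^2$ and the kinetic-energy term matter), that
$\sup_{x^*}\displaystyle\int\frac{\rho^{4/3}(x)}{|x-x^*|}\,dx\le C$ (estimate \eqref{z2}; it is here, and only here, that $\gamma>\tfrac43$ is used, to make $\tfrac{3\gamma-4\alpha^2}{3\gamma-4}<3$). It then applies Lemma~\ref{lem2.3}(ii), the Green-function/Neumann embedding, to the weight $f=\rho^{4/3}-(\rho^{4/3})_\Omega$ and the test function $\mu$, which gives $\|(\rho^{4/3}-(\rho^{4/3})_\Omega)\mu^2\|_{L^1}\le C\|\nabla\mu\|_{L^2}^2\le C$ --- a bound that is \emph{uniform}, with no $X$-growth at all. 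Combined with the elementary $\|(\rho^{4/3})_\Omega\mu^2\|_{L^1}\le\|\mu\|_{L^2}^2\le C\|\rho\|_{L^2}^{2/3}$ and the H\"older step $\|\rho\mu\|_{L^{3/2}}^2\le\|\rho^{4/3}\mu^2\|_{L^1}\|\rho\|_{L^2}^{2/3}$, this yields the exponent $\tfrac43$ on $\|\rho\|_{L^2}$, which then absorbs via $\|\rho\|_{L^2}^{4/3}=\|\rho^\gamma\|_{L^{2/\gamma}}^{4/(3\gamma)}\le C+\tfrac12\|\rho^\gamma\|_{L^s}$ using only $4/(3\gamma)<1$, i.e.\ $\gamma>\tfrac43$. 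Replacing this argument with a bare Sobolev--H\"older--interpolation chain discards exactly the gain that the Green-function lemma provides, and with it the improvement from $\gamma>2$ to $\gamma>\tfrac43$. The remaining steps in your proposal (closing the bootstrap by picking $\mathbf M=2C_0$; $\|\mu\|_{L^6}\le C$ from Lemma~\ref{lem2.1}; elliptic regularity for $c$) agree with the paper once the density bound is secured.
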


\begin{proof}
Owing to \eqref{y12} and \eqref{y14}, one has  \bnn \frac{3\g-4\a^{2}}{3\g-4}\in (0,3).\enn
By  \eqref{y10}, \eqref{y20},  \eqref{y15}, and   the H\"older inequality, we have the following estimate,
  \be\la{z2}\ba
\int \frac{\n^{\frac{4}{3}}(x)}{|x-x^{*}|}dx
&\le \left(\int \frac{\n^{\g}(x)}{|x-x^{*}|^{\a^{2}}}dx\right)^{\frac{4}{3\g}}\left(\int \frac{dx}{|x-x^{*}|^{\frac{3\g-4\a^{2}}{3\g-4}}}\right)^{\frac{3\g-4}{3\g}}\\
&\le C \left(\int \frac{\n^{\g}(x)}{|x-x^{*}|^{\a^{2}}}dx\right)^{\frac{4}{3\g}}\\
&\le C.\ea\ee
Hence, using  \eqref{r1s}, \eqref{y15},   \eqref{z2},  Part (ii)  in  Lemma \ref{lem2.3}, we find
  \be\la{ee1}\|\left(\n^{\frac{4}{3}}-(\n^{\frac{4}{3}})_{\om}\right) \mu^{2}\|_{L^{1}}\le \|\na \mu\|_{L^{2}}^{2}\left(1+\sup_{x^{*}}
  \int \frac{\n^{\frac{4}{3}}(x)}{|x-x^{*}|}dx\right)\le C.\ee
   On the other hand, it follows from \eqref{e7b}, \eqref{z2}, Lemma \ref{lem2.1}, Lemma \ref{lem3.4}, and  the interpolation inequality that
  \be\la{ee2} \|(\n^{\frac{4}{3}})_{\om}\mu^{2}\|_{L^{1}}\le (\n^{\frac{4}{3}})_{\om}\|\mu^{2}\|_{L^{1}} \le  \| \mu \|_{L^{2}}^{2}\le C\|\n\|_{L^{2}}^{\frac{2}{3}}.\ee Therefore,  utilizing  \eqref{ee1}-\eqref{ee2}  and the fact $\frac{6s}{3+2s}\le \frac{3}{2}$, we conclude
\be\la{y22}\ba \|\n \mu\|_{L^{\frac{6s}{3+2s}}}^{2}&\le C\|\n \mu\|_{L^{\frac{3}{2}}}^{2}\\
&\le C\|\n^{\frac{4}{3}}\mu^{2}\|_{L^{1}}\|\n \|_{L^{2}}^{\frac{2}{3}}\\
&\le C \left(\left\|\left(\n^{\frac{4}{3}}-(\n^{\frac{4}{3}})_{\om}\right) \mu^{2}\right\|_{L^{1}}+\left\|(\n^{\frac{4}{3}})_{\om}\mu^{2}\right\|_{L^{1}}\right)\|\n \|_{L^{2}}^{\frac{2}{3}}\\& \le C \|\n \|_{L^{2}}^{\frac{4}{3}}.\ea\ee
Substituting \eqref{y22}   into  \eqref{q1a},    using \eqref{y9}, \eqref{r1s}, \eqref{y15},  we get
\be\la{y23}\ba   \|\de \n^{4}+\n^{\g}\|_{L^{s}}
 &\le \left\|\de \n^{4}+\n^{2} \frac{\p f}{\p \n}\right\|_{L^{s}}\\
  &\le C \left(1 +\|\na u\|_{L^{2}}+ \|\n |u|^{2}\|_{L^{s}}+ \|\n \mu\|_{L^{\frac{6s}{3+2s}}}^{2}\right)\\
  &\le C \left(1 +\|\na u\|_{L^{2}}+ \|\n |u|^{2}\|_{L^{\frac{3}{2}}}+ \|\n \mu\|_{L^{\frac{3}{2}}}^{2}\right)\\
  &\le C\left(1 +\|\n \|_{L^{2}}^{\frac{4}{3}}\right).\ea\ee
Thanks to   \eqref{y10}, one has
\be\la{y24}\ba  C\|\n \|_{L^{2}}^{\frac{4}{3}}
\le C \|\n^{\g} \|_{L^{\frac{2}{\g}}}^{\frac{4}{3\g}}\le C +\frac{1}{2}\|\n^{\g}\|_{L^{s}} \le C +\frac{1}{2}\|\de \n^{4}+\n^{\g}\|_{L^{s}}.\ea\ee
The combination of  \eqref{y23} with \eqref{y24} gives rise  to
\be\la{y25}\ba   \|\de \n^{4}+\n^{\g}\|_{L^{s}} \le \overline{C},\quad (\g s>2),\ea\ee
where  $\overline{C}$ depends only on $m_{1},\g,\overline{H},\lam_{1},\lam_{2},
|\om|,\|g\|_{L^{\infty}}$. 
From \eqref{y25}, there is a constant $C_0$ independent of $\delta$, such that
$$\|\n\|_{L^{2}}\le C_0,$$
and hence  we are allowed to
 select  in \eqref{y1}
\be\la{ee3} \textbf{M}=2C_0\ee
and close the   {\it a priori}  assumption in \eqref{y1}.

It only remains   to derive the bound  of $\|c\|_{W_{n}^{2,\frac{3}{2}}}.$  From  \eqref{b1},  \eqref{y22}, \eqref{y25}, it follows that
 \be\la{y26}\ba \|\na^{2} c  \|_{L^{\frac{3}{2}}}
 &\le C\|\lap c\|_{L^{\frac{3}{2}}}  \\
 &\le C\left\|\n \frac{\p f}{\p c}\right\|_{L^{\frac{3}{2}}} +C\|\n \mu\|_{L^{\frac{3}{2}}} \\
&\le C. \ea\ee
From  \eqref{e7c} and \eqref{y25},  the same argument as   \eqref{bb12} yields 
 \bnn\ba
\int c=\frac{|\om|}{m_{1}}\int \n\left(c\right)_{\om}
&=\frac{|\om|}{m_{1}}\int \n c-\frac{|\om|}{m_{1}}\int \n\left(c-\left(c\right)_{\om}\right)\\
&=\frac{|\om|m_{2}}{m_{1}} -\frac{|\om|}{m_{1}}\int \n\left(c-\left(c\right)_{\om}\right)\\
&\le C  +C \|\n\|_{L^{\frac{6}{5}}}\|\na c\|_{L^{2}}\\
&\le C  +C \|\na c\|_{L^{2}},\ea\enn
which  implies
\be\ba \label{363}
 \|c\|_{L^{1}}&\le  \|c-\left(c\right)_{\om}\|_{L^{1}} + \|\left(c\right)_{\om}\|_{L^{1}}\le C
 +C \|\na c\|_{L^{2}}.\ea\ee
Then \eqref{363} and \eqref{y26} provide us the following estimate:
 \be\la{y30} \|c\|_{W_{n}^{2,\frac{3}{2}}}\le C.\ee
 In conclusion, the desired estimate \eqref{q1as} follows from \eqref{3.30}, \eqref{r1s},  \eqref{y15}, \eqref{y25}, and \eqref{y30}.
The proof of Lemma \ref{lem3.5} is completed.
\end{proof}

 Therefore,  the proof  of Proposition \ref{thm1.2} and  hence Theorem \ref{thm1.1} is completed.

\bigskip

\section*{Acknowledgement}
The research  of D. Wang was partially supported by the National Science Foundation under grant  DMS-1907519.
The authors would like to thank the anonymous referees for valuable comments
and suggestions.

\bigskip

\end{document}